\newcommand{\zs}{{\mathbb Z}} 
\newcommand{\cs}{{\mathbb C}} 
\newcommand{\al}{\alpha}
\newcommand{\si}{\sigma}
\newcommand{\eps}{\epsilon}
\newcommand{\sig}{\sigma}
\newcommand{\bu}{\bar u}
\newcommand{\bv}{\bar v}
\newcommand{\bw}{\bar w}
\newcommand{\bU}{\bar U}
\newcommand{\bV}{\bar V}
\newcommand{\bq}{\bar q}
\newcommand{\PP}{\mathbb{P}}
\newcommand{\II}{{\sf I}}
\DeclareMathOperator{\id}{id}
\DeclareMathOperator{\Res}{Res}
\DeclareMathOperator{\SUM}{SUM}
\newtheorem{Theorem}{Theorem}
\newtheorem{Proposition}[Theorem]{Proposition}
\newtheorem{Corollary}[Theorem]{Corollary}
\newtheorem{Lemma}[Theorem]{Lemma}
\newcommand{\beq}{\begin{equation}}
\newcommand{\eeq}{\end{equation}}
\newcommand{\gf}{generating function}
\def\emm#1,{{\em #1}}
\newcommand{\Sn}{{\mathfrak S}}
\newcommand{\cA}{{\mathfrak A}}
\newcommand{\p}{permutation}
\newcommand{\ps}{permutations}
\def\section{\@startsection{section}{1}%
 \z@{.7\linespacing\@plus\linespacing}{.5\linespacing}%
 {\normalfont\bfseries\scshape\centering}}
\def\subsection{\@startsection{subsection}{2}%
  \z@{.5\linespacing\@plus\linespacing}{.5\linespacing}%
  {\normalfont\bfseries\scshape}}
\def\subsubsection{\@startsection{subsubsection}{3}%
 \z@{.5\linespacing\@plus\linespacing}{-.5em}
  {\normalfont\bfseries\itshape}}
\def\qed{$\hfill{\vrule height 3pt width 5pt depth 2pt}$}
\title[The expected  number of inversions after $n$ adjacent
 transpositions]{The expected number of inversions\\
after $n$ adjacent transpositions}
\author{Mireille Bousquet-M\'elou}
\address{CNRS, LaBRI, Universit\'e Bordeaux 1,
351 cours de la Lib\'eration,
  33405 Talence Cedex, France}
\email{mireille.bousquet@labri.fr}
\keywords{}
\date{\today}
\begin{document}

\begin{abstract}
We give a new expression for the expected number of inversions in the
product of $n$ random adjacent transpositions in the symmetric group
$\Sn_{m+1}$. We then  derive from this expression the asymptotic behaviour of this number
when $n\equiv n_m$ scales with $m$ in various ways.
 Our starting point is an equivalence, due to Eriksson
\emm et al.,, with a problem of
weighted walks
confined to a triangular area of the plane.

\end{abstract}

\maketitle

\section{Introduction}
Let $\Sn_{m+1}$ denote the group of permutations on the set $\{0,1, 2,
\ldots , m\}$. 
Define the adjacent  transposition $s_i$ to be the two-cycle $(i,i+1)$.
We consider the \emm adjacent transposition Markov chain,  $\pi^{(0)}, \pi^{(1)}, \pi^{(2)}, \ldots$  on $\Sn_{m+1}$. That is,  $\pi^{(0)}$ is the identity, and $\pi^{(n+1)}= \pi^{(n)}s_i$, where $i \in\{0, \ldots, m-1 \}$ is chosen uniformly at random. 
For instance, denoting $\pi=\pi_0\pi_1\cdots \pi_m$ if  $\pi(i)=\pi_i$ for all $i$, we may have, when $m=3$,
$$
\begin{array}{lllll}
  \pi^{(0)}&=& 0123 &=& \id,
\\
  \pi^{(1)}&=& 0213 &=&\pi^{(0)} s_1,
\\
  \pi^{(2)}&=& 2013 &=& \pi^{(1)} s_0,
\\
  \pi^{(3)}&=& 2103 &=&\pi^{(2)} s_1.
\end{array}
$$

Markov chains on finite groups 
have attracted a lot of interest in, at least, the past 30 years, at the interface 
of algebra and probability theory~\cite{aldous,diaconis-book,diaconis-saloff-comparison,diaconis-shahshahani-transpositions,saloff-coste-notices,saloff-coste-survey}. Such chains are also studied in
computational biology, in connection with models of gene mutations~\cite{b-d-general,durrett-book,eriksen-hultman,eriksson,wang-warnow}. A central question is to
estimate the \emm mixing time, of the chain, for large values of
$m$. (The mixing time is the number of steps the chain takes to
approach its equilibrium distribution.) 
 The above chain is periodic of period 2, since $\pi^{(2n)}$ is always
 in the alternating group.  Thus it has no equilibrium
 distribution. However, an elementary modification
 (which consists in choosing $\tilde \pi^{(n+1)}= \tilde \pi^{(n)}$ with probability $1/(m+1)$ and
 otherwise multiplying by a transposition $s_i$ chosen uniformly)
 makes it aperiodic, and the equilibrium distribution is then uniform
 over $\Sn_{m+1}$. 
The mixing time is known to be $\Theta(m^3\log m)$ (see Aldous~\cite{aldous},
 Diaconis and Saloff-Coste~\cite{diaconis-saloff-comparison}
and Wilson~\cite{wilson}).

More recently, Berestycki and Durrett~\cite{b-d-adjacent}  studied a continuous time
version of this chain, denoted $(X_t^m)_{t\ge 0}$, where  multiplications by
adjacent transpositions occur according to a Poisson process of rate
1. The connection with the chain $\pi^{(n)}$
described above is straightforward. The authors focussed on the inversion number
$D_t^m$ of $X_t^m$, as a function of $m$ and $t$. They
established the convergence in probability of this random variable,
suitably normalized, when $t\equiv t_m$ scales with $m$ in various ways. The
limit is usually described in probabilistic terms, except in one simple
regime, $m  \ll t \ll m^3$, where $D_t^m/\sqrt{mt}$ is shown to converge to
$\sqrt{2/\pi} $. Of course, the interesting regimes are those
occurring \emm before, stationarity.

This paper actually stems from a remark made in Berestycki \& Durrett's paper. They
quote a paper by Eriksen~\cite{niklas}, which gives a rather formidable
expression for $\II_{m,n}$, the expected number of
inversions in $\pi^{(n)}$: 
\beq\label{niklas-expr}
\II_{m,n}
= \sum_{r=1}^n \frac 1 {m^r} {n\choose r}
\sum_{s=1}^r{{r-1}\choose {s-1}}(-4)^{r-s}g_{s,m} \,h_{s,m},
\eeq
with
$$
g_{s,m}= \sum_{\ell =0} ^m \sum_{k\ge 0} (-1)^k (m-2\ell) 
{{2\lceil s/2\rceil -1}\choose{\lceil s/2\rceil+\ell +k(m+1)}}
$$
and 
$$
h_{s,m}=\sum_{j\in \zs} (-1)^j 
{{2\lfloor s/2\rfloor }\choose{\lfloor s/2\rfloor+j(m+1)}}.
$$
The authors  underline that ``it is far from obvious how to extract
useful asymptotic from this formula''. This observation motivated our
interest in the expectation 
$\II_{m,n}$. As the problem is of an algebraic nature, it should  have
much structure: would it be possible to find an alternative expression of
$\II_{m,n}$, with a neater structure than~\eqref{niklas-expr}, and to
derive  asymptotic results from this expression?

This is precisely what we do in this paper. Our
alternative formula  for $\II_{m,n}$ reads as follows. 

\begin{Theorem}\label{thm:main}
Let $m\ge 1$. For $k \in \zs$, denote 
\beq\label{notation}
\al_k= \frac {(2k+1)\pi}{2m+2}, \quad c_k=\cos \alpha_k, \quad 
 s_k= \sin \alpha_k \quad  \hbox{  and } \quad
x_{jk}= 1-\frac 4 m (1-c_jc_k).
\eeq
The expected number of inversions after $n$ adjacent
transpositions in $\Sn_{m+1}$ is
$$
\II_{m,n}= \frac{m(m+1)} 4 -\frac 1 {8(m+1)^2}\sum_{k,j=0}^m
\frac{(c_j+c_k)^2}{s_j^2s_k^2} 
\ {x_{jk}}^n.
$$
Equivalently, the \gf\ $\II_m(t)=\sum_{n\ge 0} \II_{m,n} t^n$ is
\beq\label{GF1}
\II_m(t)= 
 \frac{m(m+1)} {4(1-t)}-\frac 1 {8(m+1)^2}\sum_{k,j=0}^m
\frac{(c_j+c_k)^2}{s_j^2s_k^2} \frac 1{1-tx_{jk}}.
\eeq
\end{Theorem}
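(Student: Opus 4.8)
The plan is to use the equivalence, due to Eriksson \emph{et al.}, between $\II_{m,n}$ and a weighted walk problem, and then diagonalise the relevant transfer matrix. By linearity of expectation $\II_{m,n}=\sum_{0\le a<b\le m}\PP_n(a,b)$, where $\PP_n(a,b)$ is the probability that the cards labelled $a$ and $b$ are in the wrong relative order after $n$ steps. The unordered pair of positions occupied by two distinguished cards, together with a bit $\epsilon=\pm1$ recording whether their original order survives, performs an explicit lazy walk on $T\times\{\pm1\}$ with $T=\{(p,q):0\le p<q\le m\}$: a step does nothing for most transpositions, moves $p$ or $q$ by $\pm1$ with probability $1/m$ each, reflects (lazily) at the outer edges $p=0$ and $q=m$, and at the diagonal edge $q=p+1$ the transposition lying between the two cards exchanges them, flipping $\epsilon$. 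Letting $L$ denote the (self-adjoint, of size $m(m+1)$) transfer matrix of this walk, one has
\[
\tfrac{m(m+1)}4-\II_{m,n}=\tfrac12\sum_{0\le a<b\le m}\EE\big[\epsilon^{(n)}_{a,b}\big]=\big\langle f,\,L^{n}g\big\rangle ,
\]
where $g$ records the $\binom{m+1}2$ initial configurations (one per pair, all with $\epsilon=+1$) and $f$ reads off $\tfrac12\epsilon$. Equivalently it is enough to compute the resolvent pairing $\big\langle f,(I-tL)^{-1}g\big\rangle$, which produces \eqref{GF1} directly; the two displayed formulas correspond to each other by partial fractions.

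The heart of the matter is the spectral analysis of $L$. On the interior of $T$ a step changes $(p,q)$ by $(\pm1,0)$ or $(0,\pm1)$, each with probability $1/m$, so there $L=(1-\tfrac4m)I+\tfrac1m(S_p+S_p^{-1}+S_q+S_q^{-1})$; in the light-cone coordinates $u=p+q$, $v=q-p$ every step moves $u$ and $v$ simultaneously by $\pm1$, and $L$ becomes $(1-\tfrac4m)I+\tfrac4m\,M_u\otimes M_v$ with $M_u,M_v$ the one-dimensional averaging operators. I would therefore seek eigenvectors of product type in $(u,v)$ — equivalently products of trigonometric functions of $p$ and of $q$ with the mixed frequencies $\al_j\pm\al_k$ — and enforce the three edge conditions by the method of images for the $45$--$45$--$90$ triangle: antisymmetrising over its reflection group, with Neumann signs across the two legs (the outer reflecting walls) and a Dirichlet sign across the hypotenuse (the sign-reversing diagonal), quantises the frequencies to exactly $\al_k=\frac{(2k+1)\pi}{2m+2}$, $0\le k\le m$, and gives the $(j,k)$-eigenvalue as $(1-\tfrac4m)+\tfrac4m c_jc_k=x_{jk}$. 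One checks that the construction returns the zero function precisely when $j+k=m$, that the remaining $m(m+1)$ eigenvectors $e_{jk}$ are pairwise orthogonal, and that they form a basis.

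With the diagonalisation in place, $\big\langle f,(I-tL)^{-1}g\big\rangle=\sum_{j,k}\frac{\langle f,e_{jk}\rangle\,\langle e_{jk},g\rangle}{\lVert e_{jk}\rVert^2}\cdot\frac1{1-tx_{jk}}$, and three families of finite trigonometric sums remain to be evaluated: the squared norm $\lVert e_{jk}\rVert^2$; the overlap $\langle e_{jk},g\rangle$, a sum of a product of two sines over the triangle $T$, which telescopes; and the overlap $\langle f,e_{jk}\rangle$, an analogous sum over the diagonal strip. Each collapses, using the elementary summation of the geometric sums $\sum_\ell e^{\mathrm i\al_k\ell}$, to a closed expression in $c_j,c_k,s_j,s_k$; assembling them turns the coefficient of $\frac1{1-tx_{jk}}$ into $-\frac1{8(m+1)^2}\cdot\frac{(c_j+c_k)^2}{s_j^2s_k^2}$, the square coming from the fact that both overlaps contribute a factor $\frac{c_j+c_k}{s_js_k}$ (a trace of the $\pm$ in the antisymmetrisation; in particular these overlaps vanish when $j+k=m$, consistently with the previous paragraph). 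Finally the term $\frac{m(m+1)}{4(1-t)}$ is pinned down either as the contribution of the stationary direction or, more economically, by imposing $\II_{m,0}=0$, which is exactly the identity $\frac1{8(m+1)^2}\sum_{j,k}\frac{(c_j+c_k)^2}{s_j^2s_k^2}=\frac{m(m+1)}4$.

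I expect the genuinely delicate points to be two. First, getting the walk and its boundary conditions exactly right — one must be sure that the collision of the two cards acts as a \emph{sign-reversing} reflection rather than an absorbing or an ordinary reflecting one, and that this Dirichlet condition on the hypotenuse together with the Neumann conditions on the legs is what forces the half-integer frequencies $\al_k$; an image construction with the wrong parity of signs would yield $\frac{k\pi}{m+1}$ or $\frac{k\pi}{m}$ instead. Second, the trigonometric bookkeeping for the two overlaps, where several nearly identical sums with shifted limits and alternating signs must be combined so that $(c_j+c_k)^2$ appears cleanly and no spurious $j\leftrightarrow k$ asymmetry is left over. The rest — the change of variables, the product ansatz, orthogonality, and the passage between the closed form for $\II_{m,n}$ and the generating function — should be routine.
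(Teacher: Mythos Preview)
Your approach is sound and genuinely different from the paper's. The paper does not diagonalise the transfer matrix at all. Instead it works with the trivariate generating function $P(t;u,v)=\sum_{n}\sum_{i\le j}p_{i,j}^{(n)}u^iv^jt^n$, derives a linear functional equation with three unknown boundary series $P_\ell,P_t,P_d$ (left, top, diagonal), and attacks it with the \emph{obstinate kernel method}: one cancels the kernel $K(u,v)=1-t+\tfrac{t}{m}(4-u-\bar u-v-\bar v)$, exploits its invariance under $u\mapsto\bar u$, $v\mapsto\bar v$ to form an alternating sum of four specialisations that kills $P_\ell$ and $P_t$, and then further specialises to $U=qV$ with $q^{m+1}=-1$ to isolate $P_d(q)$. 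A contour integral / residue computation in the auxiliary variable turns this into an explicit rational function of $t$, and Lagrange interpolation at the $(m{+}1)$th roots of $-1$ recovers $P_d(1)$, hence $\II_m(t)$ via $\II_m(t)=\tfrac{t}{(1-t)^2}-\tfrac{2tP_d(1)}{m(1-t)}$.

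Your route --- light-cone factorisation $L=(1-\tfrac4m)I+\tfrac4m\,M_u\otimes M_v$ and a method-of-images construction of eigenvectors for the $45$--$45$--$90$ triangle with Neumann legs and a Dirichlet hypotenuse --- is closer in spirit to Wilson's treatment (which the paper cites but does not follow). It has the virtue of making the eigenvalues $x_{jk}$ and the half-integer frequencies $\alpha_k$ appear inevitably from the geometry, and of reducing the final computation to three families of finite trigonometric sums. The paper's method, by contrast, never needs to guess or verify a basis of eigenvectors, and the algebraic symmetries of the kernel do the structural work; the price is the somewhat indirect passage through residues and interpolation. Both arrive at the same trigonometric identity $\sum_{j,k}\frac{(c_j+c_k)^2}{s_j^2s_k^2}=2m(m+1)^3$ to pin down the constant term, which the paper proves via~\eqref{sumcs}. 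The two points you flag as delicate --- getting the Dirichlet/Neumann parities right so that $\frac{(2k+1)\pi}{2m+2}$ rather than $\frac{k\pi}{m+1}$ emerges, and the bookkeeping in the overlap sums --- are real, but there is no missing idea.
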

This result is proved in Section~\ref{sec:exact}, and asymptotic
results are derived in Sections~\ref{sec:small} to \ref{sec:int} for
 three main regimes: linear ($n_m=\Theta(m)$) and before, cubic  ($n_m=\Theta(m^3)$) and beyond,
intermediate ($m\ll n_m \ll m^3$).
For the moment, let us give a few comments and variants on
Theorem~\ref{thm:main}.

\medskip

\noindent {\bf Limit behaviour.}
The chain $\pi^{(n)}$ has period 2, as $\pi^{(n)}$ is an even (resp. odd)
\p\ if $n$ is even (resp. odd). But the sub-chains $\pi^{(2n)}$
and $\pi^{(2n+1)}$ are aperiodic on their respective state spaces,
the alternating group $\cA_{m+1}$ (the group of even permutations) for
$\pi^{(2n)}$ and its complement
$\Sn_{m+1}\setminus\cA_{m+1}$ for $\pi^{(2n+1)}$. Moreover, each of
these chains is irreducible and symmetric, and thus admits the uniform
distribution as its equilibrium distribution. 
For $m\ge 3$, the average number of inversions of an
element of $\cA_{m+1}$ 
 is
$m(m+1)/4$, and the same is true of elements of
 $\Sn_{m+1}\setminus\cA_{m+1}$. Thus when $m\ge 3$ is fixed and
 $n\rightarrow \infty$, we expect 
$\II_{m,n}$ to tend to $m(m+1)/4$.  This  can be
seen on the above expression of $\II_{m,n}$, upon observing that, for $m\ge
3$ and $0\le j, k 
\le m$ with $j+k\not = m$, there holds $x_{jk} \in (-1,1)$. (The
condition  $j+k\not = m$ is equivalent to $c_j+c_k\not = 0$.)
If $m\ge 8$, the stronger property $x_{jk} \in (0,1)$ holds, which
shows that  $\II_{m,n}$ is an increasing function of $n$.

\medskip
\noindent {\bf Eigenvalues of the transition matrix.}
Another consequence of the above theorem is that we have identified
a quadratic number  of eigenvalues of the transition matrix of the chain
(see~\cite{niklas} for a   detailed account of the connection 
between these eigenvalues and $\II_{m,n}$).

\begin{Corollary}\label{coro:eigenvalues}
 Let $c_k=\cos \frac {(2k+1)\pi}{2m+2}$.
 The transition matrix of the adjacent transposition Markov chain on
  $\Sn_{m+1}$ admits ---among others--- the following
  eigenvalues:
$$
x_{jk}= 1-\frac 4 m (1-c_jc_k),
$$
for $0\le j, k\le m$ and $j+k\not = m$.
\end{Corollary}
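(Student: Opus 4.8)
The plan is to read Corollary~\ref{coro:eigenvalues} as an essentially immediate consequence of Theorem~\ref{thm:main}, combined with the standard spectral decomposition of a reversible Markov chain. The key observation is that $\II_{m,n}$ is a \emph{linear} functional of the distribution of $\pi^{(n)}$, and hence of the $n$-th power $P^n$ of the transition matrix $P$ acting on the probability vector concentrated at the identity. Writing $v_{\id}$ for the initial (point mass) vector and $w$ for the column vector whose $\pi$-entry is the inversion number $\mathrm{inv}(\pi)$, we have $\II_{m,n}= w^{\top} P^n v_{\id}$. Because $P$ is a symmetric (doubly stochastic, reversible with respect to the uniform measure) matrix, it is diagonalisable with real eigenvalues $\mu_1,\mu_2,\ldots$ and orthonormal eigenvectors, so $w^{\top}P^n v_{\id}= \sum_i a_i \,\mu_i^{\,n}$ for suitable real coefficients $a_i=a_i(m)$ independent of $n$. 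Thus $\II_{m,n}$ is, for each fixed $m$, a finite linear combination of $n$-th powers of eigenvalues of $P$.

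Next I would compare this with the formula of Theorem~\ref{thm:main}: there we have
$$
\II_{m,n}= \frac{m(m+1)}{4}\cdot 1^{\,n} - \frac{1}{8(m+1)^2}\sum_{k,j=0}^m \frac{(c_j+c_k)^2}{s_j^2 s_k^2}\, x_{jk}^{\,n},
$$
which exhibits $\II_{m,n}$ explicitly as a linear combination of the powers $1^n$ and $x_{jk}^{\,n}$. The numbers $x_{jk}$ with $j+k\neq m$ are exactly those pairs $(j,k)$ for which the coefficient $(c_j+c_k)^2/(s_j^2 s_k^2)$ does not vanish, since $c_j+c_k=0$ is equivalent to $\al_j+\al_k=\pi$, i.e.\ to $j+k=m$; and for $0\le j,k\le m$ with $j+k\ne m$ the denominators $s_j^2 s_k^2$ are nonzero (none of the $\al_k$ is a multiple of $\pi$), so the coefficient is a strictly positive real number. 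The standard uniqueness argument for such exponential-sum identities --- two functions $n\mapsto \sum_i a_i \mu_i^n$ and $n\mapsto \sum_i b_i \nu_i^n$ with nonzero coefficients that agree for all $n\ge 0$ must have the same set of bases appearing with nonzero coefficient (group terms with equal base, then use that the functions $n\mapsto \lambda^n$ for distinct $\lambda$ are linearly independent over $\cs$, e.g.\ via a Vandermonde determinant or by looking at the poles of the \gf) --- then forces every $x_{jk}$ with $j+k\ne m$ to coincide with one of the eigenvalues $\mu_i$ of $P$. This is precisely the claim of the corollary.

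The one genuine subtlety, and what I expect to be the main (minor) obstacle, is that the uniqueness argument only recovers bases whose coefficients in the \emph{reduced} form of $\II_{m,n}$ are nonzero: if two distinct values $x_{jk}$ and $x_{j'k'}$ happened to be equal, one would only conclude that their common value is an eigenvalue, which is still what the corollary asserts; more importantly, one must make sure no cancellation with the $1^n$ term occurs, i.e.\ that after collecting equal bases the coefficient attached to $1$ is still $m(m+1)/4\neq 0$ and the coefficients attached to the $x_{jk}$ ($j+k\ne m$) do not all cancel. Here one uses positivity: all the coefficients $-\tfrac1{8(m+1)^2}(c_j+c_k)^2/(s_j^2s_k^2)$ have the same sign, so no cancellation among the $x_{jk}$-terms is possible, and $x_{jk}<1$ for $m\ge 3$ (as already noted in the remark following Theorem~\ref{thm:main}) keeps them separate from the $1^n$ term. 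Hence each $x_{jk}$ with $j+k\ne m$ genuinely survives as a base and is therefore an eigenvalue of $P$, which completes the proof.
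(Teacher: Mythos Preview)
Your argument is correct and is exactly the approach the paper intends (and attributes to \cite{niklas}): express $\II_{m,n}=w^{\top}P^{n}v_{\id}$ with $P$ symmetric, hence a finite combination of $n$-th powers of eigenvalues of $P$, and match this against Theorem~\ref{thm:main} using the linear independence of the functions $n\mapsto\lambda^{n}$ together with the strict negativity of the coefficients $-(c_j+c_k)^2/\bigl(8(m+1)^2s_j^2s_k^2\bigr)$ for $j+k\ne m$. One cosmetic point: you only invoke $x_{jk}<1$ for $m\ge3$ via the remark after Theorem~\ref{thm:main}, but the corollary is stated for all $m\ge1$; simply observe that $|c_j|,|c_k|<1$ forces $c_jc_k<1$, hence $x_{jk}<1$, for every $m$.
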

This result is not really new: as pointed to us by David Wilson, one can derive from the proof of
Lemma~9 in~\cite{wilson} 
that for $0\le p ,q\le m$, with $p\not = q$,
$$
1-\frac 2 m \left(2- \cos \frac{p\pi}{m+1}-\cos \frac{q\pi}{m+1}\right)
$$
is also an eigenvalue. This collection is larger than the one we have
exhibited, as can be seen using
$$
2 c_jc_k= \cos \frac{(k-j)\pi}{m+1}+\cos \frac{(k+j+1)\pi}{m+1}.
$$
The transition matrix has still more eigenvalues, for instance 
$-1$, with eigenvector $(\varepsilon (\si))_{ \si
  \in \Sn_{m+1}}$ where $\varepsilon$ denotes the signature.
The paper~\cite{edelman-white}, originally motivated
by a coding problem, gives a linear number of
eigenvalues of the form $i/m$, where $i$ is an integer.
There exists a description of \emm all, eigenvalues  in terms of the characters of $\Sn_{m+1}$
(see~\cite[Thm.~3]{diaconis-shahshahani-transpositions}). This description is valid
in a much more general framework, and it seems that the complete list of
eigenvalues is not explicitly known.

\medskip
\noindent {\bf Rationality of $\II_m(t)$.}  That the series $\II_m(t)$ is
always rational should not be a surprise: this property  is clear when
considering the 
transition matrix of the chain. Here are the first few values:
\begin{eqnarray*}
  \II_1(t) &=& \frac{t}{(1-t)(1+t)} ,
\\
  \II_2(t) &=&  {\frac {t( 2+t) }{ (1- t )  ( 2-t )  ( 1+t ) }}
,
\\
  \II_3(t) &=& {\frac {3t(  27+9\,t-7\,{t}^{2}-{t}^{3} ) }
{ ( 1-t )  ( 9 +6\,t-{t}^{2})  (9 -6\,t-{t}^{2} ) }}
 ,
\\
  \II_4(t) &=&  {\frac {t( 256-192\,t-48\,{t}^{2}+44\,{t}^{3}-5\,{t}^{4} )}
{ (1- t )  (16- 5\,{t}^{2} )  (16-20\,t+ 5\,{t}^{2} ) }}
.
\\
\end{eqnarray*}

\medskip
\noindent {\bf A related aperiodic chain.}
If we consider instead the aperiodic variant of the chain  
(obtained by choosing $\tilde \pi^{(n+1)}= \tilde \pi^{(n)}$ with probability $1/(m+1)$ and
 otherwise multiplying by a transposition $s_i$ chosen uniformly), it
 is easy  to see that the expected number of inversions after $n$ steps
 is now
$$
\tilde \II_{m,n}=\sum_{k=0}^n {n\choose k} \frac {m^k}{(m+1)^n}\  \II_{m,k},
$$
so that the associated \gf\ is also rational:
$$
\tilde \II_m(t):=\sum_{n\ge 0}\tilde \II_{m,n}t^n = \frac 1 {1-t/(m+1)} \ \II_m\left( \frac{tm}{m+1-t}\right)
.$$
More generally, if  a transposition occurs with probability $p$, the
\gf\ of the expected number of inversions is
$$
 \frac 1 {1-t(1-p)} \ \II_m\left( \frac{tp}{1-t(1-p)}\right).
$$
(Above, $p=m/(m+1)$, but  in~\cite{wilson} for instance,  $p=1/2$.)

\medskip
\noindent{\bf Alternative expressions.}
 Theorem~\ref{thm:main}  gives a partial fraction expansion of $\II_m(t)$. One of the advantages of~\eqref{GF1} is
that the coefficients involved in the sum over $j$ and $k$ are
negative, which makes the list of  poles of $\II_m(t)$  clear (we
have actually already used this to state Corollary~\ref{coro:eigenvalues}).  
A number of variations are possible, and we will
 use some of them in the asymptotic study of the numbers
$\II_{m,n}$. For instance:
\begin{eqnarray}
  \II_m(t)&= &
 \frac{m(m+1)} {4(1-t)}-\frac 1 {8(m+1)^2}\sum_{k,j=0}^m
\frac{c_j+c_k}{(1-c_j)(1-c_k)} \frac 1{1-tx_{jk}}
\label{I-ser-2}\\
&=&
\frac 1 {8(m+1)^2}\sum_{k,j=0}^m
\frac{c_j+c_k}{(1-c_j)(1-c_k)} \left(\frac1 {1-t}- \frac 1{1-tx_{jk}}\right).
\label{I-ser-3}
\end{eqnarray}
Both formulas are proved in Section~\ref{sec:exact}.

\section{The expected number of inversions}
\label{sec:exact}

We prove in this section Theorem~\ref{thm:main} and its
variants~(\ref{I-ser-2}-\ref{I-ser-3}). Our starting point is a
functional equation satisfied by a series related to $\II_m(t)$. 
We then solve this equation using the bivariate \emm kernel method,, which
has proved successful in the past few years in various enumerative
problems related to lattice paths, permutations, and other
combinatorial objects~\cite{bousquet-motifs,Bous05,bousquet-mishna,bousquet-xin,Rensburg-Prellberg-Rechni,Mishna-Rechni,xin-zhang}.   
The equation under consideration here can
 be interpreted in terms of weighted lattice paths confined to a
triangular portion of the square lattice.  Another problem   of
walks in a triangle, also related to the adjacent transposition Markov chain, was studied  
by Wilson~\cite{wilson} via a diagonalization of the adjacency matrix. The enumeration of (unweighted) walks in a triangle
was performed by Flajolet~\cite{flajolet-stacks} using the reflection principle, in connection with a problem of storage
raised by Knuth~\cite[Ex.~2.2.2.13]{knuth}.

\subsection{A functional equation}\label{sec:characterization}
For $\pi\in \Sn_{m+1}$, let us write $\pi=\pi_0\pi_1\cdots \pi_m$ if
  $\pi(i)=\pi_i$ for all $i$.
For $n\ge 0$ and $0\le i \le j <m$, let
$$
p_{i,j}^{(n)}= \PP(\pi^{(n)}_i > \pi^{(n)}_{j+1}).
$$
Then the expected  number of inversions in $\pi^{(n)}$ is
\beq\label{inv-pijn}
\II_{m,n}= \sum_{0\le i \le j<m} p_{i,j}^{(n)}.
\eeq

Examining how the numbers $p_{i,j}^{(\cdot)}$ may change at the
$n^{\hbox{\small th}}$
step gives a recurrence 
relation for these numbers, first obtained by Eriksson \emm et
al,.~\cite{eriksson}. As shown by the lemma below, it
converts  our problem into the study of weighted walks confined to a
triangular region of the square lattice.
Consider the subgraph $G_m$ of the square lattice $\zs\times \zs$ induced by
the points $(i,j)$, with $0\le i \le j<m$ (Figure~\ref{fig:graph}). We
use the notation $(i,j) \leftrightarrow (k,\ell)$ to mean that the
 points $(i,j)$ and $(k,\ell)$ are adjacent  in this graph.

\begin{figure}[htb]
\begin{center}
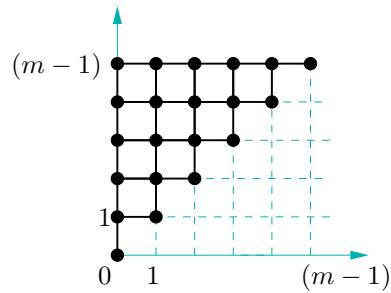
\end{center}
\vskip -5mm\caption{The graph $G_m$.}
\label{fig:graph}
\end{figure}

\begin{Lemma}[\cite{eriksson}]
\label{lem:triangle}  The inversion probabilities $p_{i,j}^{(n)}$ are
characterized by the   following recursion:
$$
p_{i,j}^{(0)}=0 \quad \hbox{for} \quad  0\le i\le j<m,
$$
and for $n\ge 0$,
$$
p_{i,j}^{(n+1)}= p_{i,j}^{(n)}+ \frac 1 m \sum_{(k,\ell)
  \leftrightarrow (i,j)} \left( p_{k,\ell}^{(n)}-p_{i,j}^{(n)}\right) 
+\frac{\delta_{i,j}} m \left( 1-2p_{i,j}^{(n)}\right),
$$
where $\delta_{i,j}= 1$ if $i=j$ and $0$ otherwise.
\end{Lemma}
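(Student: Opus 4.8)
The plan is to establish the recursion of Lemma~\ref{lem:triangle} by a direct one-step analysis of the adjacent transposition Markov chain, tracking how the event $\{\pi_i>\pi_{j+1}\}$ is affected when we multiply on the right by a uniformly chosen $s_\ell$. The base case $p_{i,j}^{(0)}=0$ is immediate, since $\pi^{(0)}=\id$ is increasing, so $\pi^{(0)}_i<\pi^{(0)}_{j+1}$ whenever $i\le j$. For the inductive step I would condition on the choice of $\ell\in\{0,\ldots,m-1\}$ at step $n+1$ and compare $\PP(\pi^{(n+1)}_i>\pi^{(n+1)}_{j+1})$ with $\PP(\pi^{(n)}_i>\pi^{(n)}_{j+1})$.

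The key observation is that right multiplication by $s_\ell$ swaps the \emph{values} in positions $\ell$ and $\ell+1$: if $\sigma=\pi s_\ell$ then $\sigma_\ell=\pi_{\ell+1}$, $\sigma_{\ell+1}=\pi_\ell$, and $\sigma_a=\pi_a$ otherwise. So the pair of positions $(i,j+1)$ that we are comparing is altered only when $\ell$ or $\ell+1$ coincides with $i$ or $j+1$. I would enumerate the relevant cases. \textbf{Case 1:} $\ell=i$ (requires $i\le m-1$, and then the new position $i$ holds the old value $\pi_{i+1}$, so we compare $\pi_{i+1}$ with $\pi_{j+1}$); this corresponds to the neighbour $(i+1,j)$ of $(i,j)$ in $G_m$, valid when $i+1\le j$, i.e. $i<j$. \textbf{Case 2:} $\ell=i-1$ (new position $i$ holds old value $\pi_{i-1}$, comparing $\pi_{i-1}$ with $\pi_{j+1}$), corresponding to the neighbour $(i-1,j)$, valid when $i-1\ge 0$. \textbf{Case 3:} $\ell=j+1$ (new position $j+1$ holds old value $\pi_{j+2}$, comparing $\pi_i$ with $\pi_{j+2}$), corresponding to the neighbour $(i,j+1)$, valid when $j+1\le m-1$, i.e. $j<m-1$. \textbf{Case 4:} $\ell=j$ (new position $j+1$ holds old value $\pi_j$, comparing $\pi_i$ with $\pi_j$), corresponding to the neighbour $(i,j-1)$, valid when $j-1\ge i$, i.e. $i<j$. \textbf{Degenerate case:} when $i=j$, the choices $\ell=i$ and $\ell=j$ coincide and are the single choice $\ell=i$, which swaps $\pi_i$ and $\pi_{i+1}=\pi_{j+1}$ themselves; here the new state satisfies $\pi^{(n+1)}_i>\pi^{(n+1)}_{j+1}$ precisely when the old state had $\pi^{(n)}_i<\pi^{(n)}_{j+1}$, contributing $(1-p_{i,i}^{(n)})/m$ in place of $p_{i,i}^{(n)}/m$, which is the source of the extra term $\frac{\delta_{i,j}}{m}(1-2p_{i,j}^{(n)})$. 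For all remaining choices of $\ell$ (there are $m-(\text{number of special choices})$ of them) the positions $i$ and $j+1$ are untouched, contributing $p_{i,j}^{(n)}/m$ each.

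Assembling these contributions, each admissible neighbour $(k,\ell)\leftrightarrow(i,j)$ in $G_m$ contributes $p_{k,\ell}^{(n)}/m$, the ``inert'' choices contribute $\bigl(m-N\bigr)p_{i,j}^{(n)}/m$ where $N$ is the number of special choices, and I would check that $N$ equals the number of neighbours of $(i,j)$ in $G_m$ plus $2\delta_{i,j}$ (the doubling at $i=j$ because $\ell=i$ then plays a double role). This bookkeeping is exactly what turns the raw case analysis into $p_{i,j}^{(n+1)}=p_{i,j}^{(n)}+\frac1m\sum_{(k,\ell)\leftrightarrow(i,j)}(p_{k,\ell}^{(n)}-p_{i,j}^{(n)})+\frac{\delta_{i,j}}m(1-2p_{i,j}^{(n)})$, after noting that the edges ``leaving'' $G_m$ on the boundary (the forbidden choices of $\ell$) simply do not appear in the sum, consistently with $G_m$ being the induced subgraph on $\{0\le i\le j<m\}$.

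The main obstacle is the boundary bookkeeping: one must be careful that each of the four geometric directions is available exactly when the corresponding choice of $\ell$ lies in $\{0,\ldots,m-1\}$ \emph{and} the resulting point still lies in $G_m$, and that these two conditions coincide --- in particular along the diagonal $i=j$ and along the edges $i=0$ and $j=m-1$ --- so that the number of inert choices is correctly $m$ minus the degree of $(i,j)$ in $G_m$, with the single exception of the diagonal where the $\delta_{i,j}$ term absorbs the discrepancy. Since this is the proof of a lemma attributed to Eriksson \emph{et al.}~\cite{eriksson}, I would present it concisely, or alternatively simply cite~\cite{eriksson} and sketch the one-step argument, as the remainder of the paper uses only the statement of the recursion and not its derivation.
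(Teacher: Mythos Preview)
The paper does not give its own proof of this lemma: it is stated with attribution to Eriksson \emph{et al.}~\cite{eriksson}, preceded only by the one-sentence remark that ``examining how the numbers $p_{i,j}^{(\cdot)}$ may change at the $n^{\hbox{\small th}}$ step gives a recurrence relation for these numbers.'' Your proposal fleshes out exactly this one-step conditioning argument, and the case-by-case analysis is correct.

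One small wording issue in your bookkeeping: when $i=j$, the number of \emph{distinct} special values of $\ell$ is $\deg(i,i)+1$, not $\deg(i,i)+2$. The factor $2\delta_{i,j}$ that surfaces in the final formula is not a count of special choices; it appears because the degenerate choice $\ell=i$ contributes $(1-p_{i,i}^{(n)})/m$ in place of $p_{i,i}^{(n)}/m$, a net shift of $-2p_{i,i}^{(n)}/m$ in the coefficient of $p_{i,i}^{(n)}$. With that correction the accounting closes exactly as you describe. Your closing suggestion --- to cite~\cite{eriksson} and give only a brief sketch --- is in fact what the paper does.
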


As often, it is convenient to handle   the numbers
$p_{i,j}^{(n)}$ via their \gf:
$$
P(t;u,v) \equiv P(u,v) := \sum_{n\ge 0} t^n \sum_{0\le i \le j <m}
p_{i,j}^{(n)} u^i v^j.
$$
Multiplying the above recursion by $t^{n+1}$, and then summing over
$n$, gives the following functional equation for $P(u,v)$.

\begin{Corollary}
The series $P(u,v)$ satisfies
\begin{multline}\label{eqP}
\left( 1-t +\frac t m (4-u-\bu-v-\bv)\right) P(u,v)=\\
\frac  t m \left(
\frac {1-u^m v^m}{(1-uv)(1-t)}
-(\bu -1) P_\ell(v) -(v-1)v^{m-1}P_t(u) -(u+\bv) P_d(uv)
\right),
\end{multline}
where $\bu=1/u$, $\bv=1/v$, and the series $P_\ell$, $P_t$ and $P_d$
 describe the numbers 
$p_{i,j}^{(n)}$ on the three borders (left, top, and diagonal) of the graph
$G_m$:
\begin{eqnarray}
  P_\ell(v)
&=&\sum_{n\ge 0}  t^n \sum_{0 \le j <m}
p_{0,j}^{(n)} v^j, \nonumber \\
P_t(u)&=&\sum_{n\ge 0}  t^n \sum_{0 \le i <m}
p_{i,m-1}^{(n)} u^i,\nonumber \\
P_d(u)&= &\sum_{n\ge 0}  t^n \sum_{0 \le i <m}
p_{i,i}^{(n)} u^i. \label{Pd-def}
\end{eqnarray}
\end{Corollary}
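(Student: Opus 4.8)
The statement is the routine consequence, announced just above it, of the recursion of Lemma~\ref{lem:triangle}, so the plan is simply to carry that computation out with care. Multiply the recursion by $t^{n+1}u^iv^j$ and sum over $n\ge 0$ and over all pairs $(i,j)$ with $0\le i\le j<m$. Because $p_{i,j}^{(0)}=0$, the left-hand side collapses to $P(u,v)$; the term $p_{i,j}^{(n)}$ on the right contributes $tP(u,v)$, and the diagonal source term $\frac{\delta_{i,j}}m(1-2p_{i,j}^{(n)})$ contributes $\frac tm\bigl(\frac1{1-t}\cdot\frac{1-u^mv^m}{1-uv}-2P_d(uv)\bigr)$, where we have used $\sum_{0\le i<m}(uv)^i=\frac{1-u^mv^m}{1-uv}$ and the definition~\eqref{Pd-def} of $P_d$. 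The only genuine work is the discrete-Laplacian term $\frac1m\sum_{(k,\ell)\leftrightarrow(i,j)}\bigl(p_{k,\ell}^{(n)}-p_{i,j}^{(n)}\bigr)$.

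I would handle it by writing it as $A^{(n)}-B^{(n)}$ with $A^{(n)}=\sum_{i,j}u^iv^j\sum_{(k,\ell)\leftrightarrow(i,j)}p_{k,\ell}^{(n)}$ and $B^{(n)}=\sum_{i,j}d_{i,j}\,p_{i,j}^{(n)}u^iv^j$, where $d_{i,j}$ is the degree of $(i,j)$ in $G_m$. For $A^{(n)}$, interchange the endpoints of each edge: $A^{(n)}=\sum_{k,\ell}p_{k,\ell}^{(n)}\bigl(\sum_{(i,j)\leftrightarrow(k,\ell)}u^iv^j\bigr)$, and compare the inner neighbour polynomial with the value $u^kv^\ell(u+\bar u+v+\bar v)$ it would take if $(k,\ell)$ had all four square-lattice neighbours inside $G_m$. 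The discrepancy is the sum of the monomials attached to the missing directions: a left neighbour is absent exactly on the border $i=0$ and removes $\bar u\,u^kv^\ell$; a right and a bottom neighbour are absent exactly on the diagonal $i=j$ and together remove $(u+\bar v)u^kv^\ell$; a top neighbour is absent exactly on $j=m-1$ and removes $v\,u^kv^\ell$. Summing these discrepancies against $p_{k,\ell}^{(n)}t^n$ and recognising the definitions of $P_\ell$, $P_t$, $P_d$ yields $\sum_n t^nA^{(n)}=(u+\bar u+v+\bar v)P(u,v)-\bar uP_\ell(v)-v^mP_t(u)-(u+\bar v)P_d(uv)$. For $B^{(n)}$, record first that $d_{i,j}=4-[i=0]-2[i=j]-[j=m-1]$ — a one-line inspection of the four candidate neighbours, valid at the corners as well — so that $\sum_n t^nB^{(n)}=4P(u,v)-P_\ell(v)-v^{m-1}P_t(u)-2P_d(uv)$.

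It only remains to put the pieces together. Subtracting the last two displays gives, for the Laplacian term, $(u+\bar u+v+\bar v-4)P(u,v)-(\bar u-1)P_\ell(v)-(v-1)v^{m-1}P_t(u)-(u+\bar v-2)P_d(uv)$; multiplying by $t/m$, adding the source contribution — whose $-\frac{2t}m P_d(uv)$ merges with $-\frac tm(u+\bar v-2)P_d(uv)$ into exactly $-\frac tm(u+\bar v)P_d(uv)$ — and moving the $P(u,v)$-terms to the left, one obtains precisely~\eqref{eqP}, the kernel $1-t+\frac tm(4-u-\bar u-v-\bar v)$ appearing automatically. I do not anticipate any conceptual difficulty; the one point that needs attention is the bookkeeping at the three corner vertices $(0,0)$, $(0,m-1)$, $(m-1,m-1)$, where two borders meet — the ``missing-direction'' description above counts each missing edge once and so handles them correctly, but it is worth cross-checking there against the explicit neighbour polynomials, and also checking the final equation against its first few coefficients, e.g.\ $P(u,v)=\frac tm\cdot\frac{1-u^mv^m}{1-uv}+O(t^2)$.
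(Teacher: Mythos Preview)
Your proposal is correct and follows precisely the approach the paper indicates: the paper's own ``proof'' is the single sentence ``Multiplying the above recursion by $t^{n+1}$, and then summing over $n$, gives the following functional equation for $P(u,v)$,'' and you have simply carried out this routine computation in full detail, with careful (and correct) bookkeeping of the boundary corrections. Your missing-direction accounting and the degree formula $d_{i,j}=4-[i=0]-2[i=j]-[j=m-1]$ handle the corners correctly, and the merge of the $-2P_d(uv)$ from the source term with $-(u+\bar v-2)P_d(uv)$ into $-(u+\bar v)P_d(uv)$ is exactly right.
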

In view of~\eqref{inv-pijn}, the \gf\ we are interested in is
$$
\II_m(t)=\sum_{n\ge 0} \II_{m,n} t^n = P(1,1),
$$
which, according to the functional equation~\eqref{eqP}, may be rewritten 
\beq\label{Im-Pd}
\II_m(t)= \frac t{(1-t)^2} -\frac {2t P_d(1)}{m(1-t)}.
\eeq
In the next subsection, we solve~\eqref{eqP}, at least to the point where
we obtain a closed form expression of  $P_d(1)$, and hence a
closed form expression of $\II_m(t)$, as announced in Theorem~\ref{thm:main}.

\subsection{Solution of the functional equation}
We first establish a symmetry property of the series $P(u,v)$.
\begin{Lemma}\label{lem:symm}
  The series $P(u,v)$ satisfies
$$
P(u,v)=u^{m-1} v^{m-1} P(\bv, \bu)
$$
with $\bu=1/u$, $\bv=1/v$. In particular, the ``diagonal'' \gf\ $P_d(u)$
satisfies
\beq\label{symm-diag}
P_d(u)=u^{m-1}P_d(\bu).
\eeq
\end{Lemma}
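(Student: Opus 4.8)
The plan is to deduce the symmetry of $P(u,v)$ from a symmetry of the underlying combinatorial data, namely the inversion probabilities $p_{i,j}^{(n)}$. Concretely, I would first prove the pointwise identity
$$
p_{i,j}^{(n)} = p_{m-1-j,\,m-1-i}^{(n)} \qquad \text{for all } n\ge 0 \text{ and } 0\le i\le j<m.
$$
Once this is known, the functional equation reformulation is immediate: substituting into the definition of $P(u,v)$ and reindexing $(i,j)\mapsto (m-1-j, m-1-i)$ (which is an involution on the index set $\{0\le i\le j<m\}$) produces exactly the factor $u^{m-1}v^{m-1}$ and swaps $u$ with $\bar v$. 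For the diagonal specialisation, the substitution $i=j$ forces $m-1-j=m-1-i$, so $p_{i,i}^{(n)}=p_{m-1-i,\,m-1-i}^{(n)}$, and feeding this into the definition~\eqref{Pd-def} of $P_d(u)$ and reindexing $i\mapsto m-1-i$ gives $P_d(u)=u^{m-1}P_d(\bar u)$. These last steps are routine bookkeeping; the real content is the pointwise symmetry.

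To establish the pointwise symmetry I would argue probabilistically. The map $\si\mapsto \si^{\sharp}$ on $\Sn_{m+1}$ defined by $\si^\sharp(i) = m - \si(m-i)$ is a group automorphism (it is conjugation by the reversal permutation $w_0:\,i\mapsto m-i$, composed with... — more precisely $\si^\sharp = w_0\,\si\,w_0$) which sends the generator $s_i$ to $s_{m-1-i}$. Since the step distribution of the chain is uniform over $\{s_0,\ldots,s_{m-1}\}$, it is invariant under $i\mapsto m-1-i$, and hence the law of the chain $(\pi^{(n)})_{n\ge 0}$ started at the identity is invariant under the automorphism $\sharp$: the processes $(\pi^{(n)})$ and $((\pi^{(n)})^\sharp)$ have the same distribution. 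Now observe that $\pi^\sharp_i = m - \pi_{m-i}$, so the event $\{\pi_i > \pi_{j+1}\}$ corresponds, after applying $\sharp$, to the event $\{\pi^\sharp_{m-i} < \pi^\sharp_{m-j-1}\}$, i.e.\ to an inversion at positions $(m-j-1, m-i)$ in $\pi^\sharp$. Equating probabilities gives $p_{i,j}^{(n)} = \PP(\pi^{(n)}_{m-j-1} > \pi^{(n)}_{m-i})$ — wait, one must be careful about which index is larger; since $0\le i\le j<m$ we have $0\le m-1-j\le m-1-i<m$, so this probability is exactly $p^{(n)}_{m-1-j,\,m-1-i}$, as desired.

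An alternative, entirely self-contained route avoids the automorphism language and instead verifies the symmetry directly from Lemma~\ref{lem:triangle} by induction on $n$. The base case $n=0$ is trivial since all $p_{i,j}^{(0)}=0$. For the inductive step, one checks that the reindexing $(i,j)\mapsto(m-1-j,m-1-i)$ is a graph automorphism of $G_m$ (it is the reflection of the triangle across the anti-diagonal), that it fixes the diagonal $\{i=j\}$ setwise and hence preserves the inhomogeneous term $\frac{\delta_{i,j}}{m}(1-2p_{i,j}^{(n)})$, and that the Laplacian-type sum $\sum_{(k,\ell)\leftrightarrow(i,j)}(p_{k,\ell}^{(n)}-p_{i,j}^{(n)})$ transforms correctly under the automorphism given the inductive hypothesis. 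This makes the whole recursion equivariant, so the symmetry propagates from step $n$ to step $n+1$.

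The main obstacle — really the only place requiring care — is pinning down the combinatorics of the reflection: getting the correct index shift (it is $i\mapsto m-1-i$ on the positions of the pair $(i,j)$, coming from $w_0$ acting on $\{0,\ldots,m\}$ by $i\mapsto m-i$ and the bookkeeping that an inversion at $(i,j+1)$ becomes an inversion at $(m-j-1,m-i)$), checking that this is an involution of the triangular index set $\{0\le i\le j<m\}$ fixing the diagonal, and confirming the generator set $\{s_0,\dots,s_{m-1}\}$ is stable under it. Once these are verified the factors $u^{m-1}v^{m-1}$ and the swap $(u,v)\mapsto(\bar v,\bar u)$ fall out mechanically, and the diagonal statement~\eqref{symm-diag} is the obvious corollary obtained by setting $v=\bar u$ in $P(u,v)=u^{m-1}v^{m-1}P(\bar v,\bar u)$ after first extracting the diagonal, or more directly by the reindexing argument above.
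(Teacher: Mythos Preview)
Your proposal is correct and follows essentially the same approach as the paper: the paper also proves the pointwise symmetry $p_{i,j}^{(n)}=p_{m-1-j,\,m-1-i}^{(n)}$ via conjugation by the reversal permutation $\tau:k\mapsto m-k$ (your $w_0$), observing that this conjugation sends $s_i$ to $s_{m-1-i}$ and hence preserves the law of the chain. The paper mentions in passing that the symmetry can also be derived from the functional equation (close to your alternative inductive route via Lemma~\ref{lem:triangle}) but, like you, prefers the probabilistic argument.
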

\begin{proof}
This can be derived from the functional equation satisfied by
$P(u,v)$, but we prefer to give a combinatorial (or probabilistic)
argument.

  Let $\tau$ be the \p \ of $\Sn_{m+1}$ that sends $k$ to $m-k$ for
  all $k$. Note that $\tau$ is an involution. Let $\Phi$ denote the
  conjugacy by $\tau$: that is, $\Phi(\sig)=\tau\sig\tau$. Of course,
  $\Phi(\id)=\id$ and $\Phi(\si \si')= \Phi(\si)\Phi(\si')$. Also, $\Phi$ has a
  simple description in terms of the \emm diagram, of $\sig$, in which
  $\sig(i)$ is plotted against $i$: the diagram of $\Phi(\sig)$ is
  obtained by applying a rotation of 180 degrees to the diagram of
  $\sig$. In particular, with $s_i=(i,i+1)$, one has
  $\Phi(s_i)=s_{m-1-i}$ for $0\le i <m$.  These properties imply that the   sequence of random
  \ps\ $(\Phi(\pi^{(0)}), \Phi(\pi^{(1)}), \Phi(\pi^{(2)}), \ldots)$
  follows the same law as the original Markov chain $(\pi^{(0)},
  \pi^{(1)}, \pi^{(2)}, \ldots)$. In particular, for $0\le i \le j <m$,
$$
p_{i,j}^{(n)}= \PP( \Phi(\pi^{(n)})_i > \Phi(\pi^{(n)})_{j+1})
= \PP ( \pi^{(n)}_{m-j-1} > \pi^{(n)}_{m-i}) 
= p^{(n)}_{m-j-1, m-i-1}.
$$
This is equivalent to the first statement of the lemma. The second
follows by specialization.
\end{proof}

The next ingredient in our solution is the ``obstinate'' kernel method
of~\cite{bousquet-motifs,Bous05,bousquet-mishna,bousquet-xin,Rensburg-Prellberg-Rechni,Mishna-Rechni,xin-zhang}.   
The \emm kernel, of the functional equation~\eqref{eqP} is
the coefficient of $P(u,v)$, namely 
$$
K(u,v)= 1-t +\frac t m (4-u-\bu-v-\bv).
$$
Let $(U,V)$ be a pair of Laurent series in $t$ that cancel the
kernel: $K(U,V)=0$. The series $P(U,V)$ is well-defined (because
$P(u,v)$ is a polynomial in $u$ and $v$). Setting $u=U$ and $v=V$
in~\eqref{eqP} cancels the left-hand side,  and thus the
right-hand side. That is, denoting as usually $\bU=1/U$ and $\bV=1/V$,
$$
 \frac {\bV^{m-1}(1-U^m V^m)}{(1-U V)(\bU -1)(V-1)(1-t)}
- \frac{\bV^{m-1} P_\ell(V)}{V-1} -\frac{P_t(U)}{\bU -1} 
-\frac{\bV^{m-1}(U+\bV)}{(\bU -1)(V-1)} P_d(UV) = 0
$$
provided $U\not=1$ and $V\not =1$.
Let us now exploit the symmetries of the kernel: obviously, $K(u,v)$
is invariant by the transformations $u\mapsto \bu$ and $v\mapsto
\bv$. Hence the pairs $(\bU, V), (\bU, \bV)$ and $(U, \bV)$ also
cancel $K$, and it follows that 
$$
 \frac {\bV^{m-1}(1-\bU^m V^m)}{(1-\bU V)(U -1)(V-1)(1-t)}
- \frac{\bV^{m-1} P_\ell(V)}{V-1} -\frac{P_t(\bU)}{U -1} 
-\frac{\bV^{m-1}(\bU+\bV)}{(U -1)(V-1)} P_d(\bU V) = 0,
$$
$$
 \frac {V^{m-1}(1-\bU^m \bV^m)}{(1-\bU \bV)(U -1)(\bV-1)(1-t)}
- \frac{V^{m-1} P_\ell(\bV)}{\bV-1} -\frac{P_t(\bU)}{U -1} 
-\frac{V^{m-1}(\bU+V)}{(U -1)(\bV-1)} P_d(\bU\bV) = 0,
$$
$$
 \frac {V^{m-1}(1-U^m \bV^m)}{(1-U \bV)(\bU -1)(\bV-1)(1-t)}
- \frac{V^{m-1} P_\ell(\bV)}{\bV-1} -\frac{P_t(U)}{\bU -1} 
-\frac{V^{m-1}(U+V)}{(\bU -1)(\bV-1)} P_d(U\bV) = 0.
$$

\smallskip
Let us form the alternating sum of the four previous equations: all
occurrences of $P_\ell$ and $P_t$ vanish, and,
using~\eqref{symm-diag}, one obtains, after multiplying by $U^m V^m (1-U)(1-V)$:
\begin{multline}
  \left( UV+1 \right) 
\left( {U}^{m+1}+{V}^{m+1} \right) 
P_d \left( UV \right) 
+  {V}^{m-1}\left( U+V \right) \left( 1+{U}^{m+1}{V}^{m+1} \right)
P_d \left(U\bV  \right) 
\label{Pd-2}\\
=\frac {UV} {1-t}
\left(
{\frac { \left( 1-{U}^{m}{V}^{m} \right)  \left( {U}^{m}+{V}^{m}
    \right) }
{1-UV}}
-{\frac { \left( {U}^{m}-{V}^{m} \right)  \left( 1+{U}^{m}V^{m} \right) }{V-U}}
\right) 
\end{multline}
as soon as $K(U,V)=0$, $U\not=1$ and $V\not =1$.

This equation involves only one unknown series, namely $P_d$. So far, the series
$U$ and $V$ are coupled by a single condition, $K(U,V)=0$.  Let $q$ be a
complex root of $q^{m+1}=-1$, and
let us add a second constraint on the pair $(U,V)$ by
requiring that $U=q V$. That is,  $V$ must be a root of 
\beq\label{Ker-def-q}
K(qV,V)=1-t + \frac t m \left( 4- (1+q)(V+\bq \bV)\right)=0
\eeq
where $\bq =1/q$ and $\bV =1/V$. 
We further assume $q\not = -1$ (otherwise $K(qv,v)$ is independent of
$v$). Then $V\not =1$, $U=q V\not =1$, and the first
term of~\eqref{Pd-2} vanishes.  We thus obtain an explicit
expression  of $P_d(q)$, which we write below in terms of $V$ and its
conjugate root  $V'=\bq \bV$. 

\begin{Lemma}\label{lem:Pd1}
  Let $q\not = -1$ satisfy $q^{m+1}=-1$, and let $V\equiv V(t)$ and $V'\equiv V'(t)$ be the two roots of~\eqref{Ker-def-q}.
Then $V V'=\bq=1/q$ and
$$
P_d(q)= \frac \bq {(1-t)(V^{m+1}+V'^{m+1})}
\left(
\frac {V^m+V'^m}{1-q} + \frac {1-\bq}{1+q}\, \frac{V^m-V'^m}{V-V'}
\right).
$$
\end{Lemma}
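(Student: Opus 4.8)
The plan is to carry out the obstinate kernel computation set up just before the statement: specialize equation~\eqref{Pd-2} to the curve $U=qV$, where $q^{m+1}=-1$ and $q\neq -1$, and read off $P_d(q)$. First I would justify the specialization: $P(u,v)$ is a polynomial in $u,\bu$ and $v,\bv$, so $P$ and $P_d$ may be evaluated at any Laurent series in $t$, and~\eqref{Pd-2} holds as soon as $K(U,V)=0$, $U\neq1$, $V\neq1$. Clearing denominators in~\eqref{Ker-def-q}, the condition $K(qV,V)=0$ becomes the quadratic
\[
\frac{t(1+q)}{m}\,V^2-\Bigl(1-t+\frac{4t}{m}\Bigr)V+\frac{t(1+q)\bq}{m}=0 ,
\]
whose leading coefficient is nonzero since $q\neq-1$; Vieta's formulas give $VV'=\bq$, the first assertion of the lemma. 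Taking for $V$ the root that is a formal power series in $t$ (it has zero constant term, since the discriminant is a unit), and noting $q\neq1$, one gets $V\neq1$ and $U=qV\neq1$, so the substitution is legitimate.

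Next I would simplify~\eqref{Pd-2} term by term under $U=qV$. Since $q^{m+1}=-1$, we have $U^{m+1}+V^{m+1}=(q^{m+1}+1)V^{m+1}=0$, so the first term disappears; and since $U\bV=q$, the surviving unknown is exactly $P_d(q)$, with coefficient $V^{m-1}(U+V)(1+U^{m+1}V^{m+1})=(1+q)V^m(1-V^{2m+2})$. Writing $V'=\bq\bV=1/(qV)$ and using $\bq^{m+1}=-1$ gives $V'^{m+1}=-V^{-(m+1)}$, hence $V^{m+1}+V'^{m+1}=(V^{2m+2}-1)/V^{m+1}$, and the coefficient becomes $-(1+q)V^{2m+1}(V^{m+1}+V'^{m+1})$. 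Dividing the specialized right-hand side of~\eqref{Pd-2} by this quantity produces a closed form for $P_d(q)$ in terms of $V$, $V'$ and $q$.

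The only real work, and the step I expect to be the main obstacle, is to recognize that this closed form equals the symmetric expression in the statement. I would reduce everything using the relations $q\bq=1$, $q^m=-\bq$, $\bq^m=-q$ (all equivalent to $q^{m+1}=-1$) together with $VV'=\bq$. The crucial elementary identities are $1-qV^2=-q(V^2-\bq)$, $q+V^{2m}=q(1+\bq V^{2m})$, $\frac{1+\bq}{1+q}=\bq$, together with $\frac{V^m-V'^m}{V-V'}=\frac{V^{2m}+q}{V^{m-1}(V^2-\bq)}$ and $V^m+V'^m=\frac{V^{2m}-q}{V^m}$; with these, the block of~\eqref{Pd-2} carrying the $1-UV$ denominator collapses to $\frac{1-\bq}{1+q}\cdot\frac{V^m-V'^m}{V-V'}$ and the block with the $V-U$ denominator to $\frac{V^m+V'^m}{1-q}$, and pulling $\bq/\bigl((1-t)(V^{m+1}+V'^{m+1})\bigr)$ out of the resulting sum yields the announced formula. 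The bookkeeping is short but error-prone, so I would finally double-check the identity on the first few values of $m$, where $P_d$ can be computed directly from the recursion of Lemma~\ref{lem:triangle}.
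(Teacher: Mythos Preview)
Your proposal is correct and follows exactly the route the paper takes: the paper does not write out a proof of this lemma at all, but simply sets up~\eqref{Pd-2}, remarks that on the curve $U=qV$ (with $q^{m+1}=-1$, $q\neq-1$) the first term of~\eqref{Pd-2} vanishes and the surviving unknown is $P_d(q)$, and then records the resulting formula in terms of $V$ and $V'=\bq\bV$. Your write-up supplies precisely the omitted bookkeeping --- the Vieta argument for $VV'=\bq$, the rewriting of the coefficient of $P_d(q)$ as $-(1+q)V^{2m+1}(V^{m+1}+V'^{m+1})$, and the identification of the two blocks on the right-hand side --- and the identities you list are correct (one minor wording slip: $P(u,v)$ is a polynomial in $u,v$, not in $u,\bu,v,\bv$, but this is harmless since only $P_d$, a polynomial in $u$, is evaluated).
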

 The series $V$ and $V'$ are algebraic (of degree 2) over
 $\cs(t)$. But their symmetric  functions are rational, and thus
 $P_d(q)$ is rational too, as expected.  The following lemma gives an explicit
rational expression of $P_d(q)$.
\begin{Lemma}\label{lem:Pd2}
  Let $q\not = -1$ satisfy $q^{m+1}=-1$. Assume
  $q=q_k:=e^ {i \frac{2k+1}{m+1}\pi}$.
Then
$$
P_d(q_k) 
= \frac {2it}{q_k(1-t) m(m+1) s_k} 
\sum_{j=0}^m  \frac{(c_k+c_j)(1-c_kc_j)}{1-tx_{jk}}
$$
with 
$$
c_j= \cos \frac{(2j+1)\pi}{2m+2}, \quad 
s_j= \sin \frac{(2j+1)\pi}{2m+2} 
\quad \hbox{and} \quad x_{jk}= 1-\frac 4 m (1-c_jc_k).
$$
Equivalently,
$$
P_d(q_k)= \frac {i c_k} {2 q_k(1-t)s_k}
-   \frac {i}{2q_k (m+1) s_k} 
\sum_{j=0}^m  
\frac{c_k+c_j}{1-tx_{jk}}.
$$
\end{Lemma}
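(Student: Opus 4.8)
\medskip\noindent\textbf{Proof plan.} The plan is to eliminate the algebraic quantities $V,V'$ from Lemma~\ref{lem:Pd1} by a trigonometric parametrisation, which turns $P_d(q_k)$ into a rational function of a single variable $z$, and then to read off its partial fraction expansion over $z$ before translating back to $t$. Concretely, write $q_k=e^{2i\alpha_k}$ and set $W:=e^{i\alpha_k}V$. Since $VV'=\bar q_k=e^{-2i\alpha_k}$ by Lemma~\ref{lem:Pd1}, one gets $e^{i\alpha_k}V'=1/W$, so that, with $z:=W+1/W=e^{i\alpha_k}(V+V')$, the kernel equation~\eqref{Ker-def-q} becomes $z=\dfrac{m(1-t)+4t}{2tc_k}$ (using $1+q_k=2c_ke^{i\alpha_k}$); in particular $z$ is a rational function of $t$. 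Every symmetric Laurent polynomial in $W$ is a polynomial in $z$, so $W^m+W^{-m}$, $W^{m+1}+W^{-m-1}$ and $(W^m-W^{-m})/(W-W^{-1})$ are polynomials in $z$, monic of degrees $m$, $m+1$ and $m-1$. The right-hand side of Lemma~\ref{lem:Pd1} is symmetric in $V\leftrightarrow V'$, so the substitution is unambiguous; carrying it out, and using $1-q_k=-2is_ke^{i\alpha_k}$ and $1-\bar q_k=2is_ke^{-i\alpha_k}$, all the factors $e^{\pm i\alpha_k}$ should collapse to a single $\bar q_k$, leaving
$$
P_d(q_k)=\frac{i\bar q_k}{2s_k(1-t)}\,G(z),\qquad
G(z)=\frac{\bigl(W^m+W^{-m}\bigr)+\dfrac{2s_k^2}{c_k}\,\dfrac{W^m-W^{-m}}{W-W^{-1}}}{W^{m+1}+W^{-m-1}}.
$$

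Next I would expand $G$ in partial fractions in $z$. The denominator $D(z):=W^{m+1}+W^{-m-1}$ is monic of degree $m+1$ and vanishes exactly at $z=2c_j$, $j=0,\dots,m$: for $W=e^{i\alpha_j}$ one has $W^{m+1}+W^{-m-1}=2\cos\bigl((m+1)\alpha_j\bigr)=2\cos\tfrac{(2j+1)\pi}{2}=0$, and $c_0>\dots>c_m$ are distinct since $0<\alpha_0<\dots<\alpha_m<\pi$. As the numerator of $G$ has degree $m<m+1$, we obtain $G(z)=\sum_{j=0}^m A_j/(z-2c_j)$ with no polynomial part, $A_j$ the residue. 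Parametrising $z=2\cos\theta$ gives $D'(z)=(m+1)\sin\bigl((m+1)\theta\bigr)/\sin\theta$; the relations $(m+1)\alpha_j=\tfrac{(2j+1)\pi}{2}$, $\cos(m\alpha_j)=(-1)^js_j$, $\sin(m\alpha_j)=(-1)^jc_j$, $\sin\bigl((m+1)\alpha_j\bigr)=(-1)^j$ then yield $D'(2c_j)=(m+1)(-1)^j/s_j$, while evaluating the numerator at $W=e^{i\alpha_j}$ together with the elementary identity $c_ks_j^2+c_js_k^2=(c_k+c_j)(1-c_jc_k)$ gives numerator value $2(-1)^j(c_k+c_j)(1-c_jc_k)/(c_ks_j)$. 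Hence $A_j=\dfrac{2(c_k+c_j)(1-c_jc_k)}{(m+1)c_k}$.

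Finally I would translate back to $t$. From $z=\dfrac{m(1-t)+4t}{2tc_k}$ one checks the clean identity $z-2c_j=\dfrac{m(1-tx_{jk})}{2tc_k}$, so $1/(z-2c_j)=2tc_k/\bigl(m(1-tx_{jk})\bigr)$; feeding this and the values $A_j$ into $G=\sum_j A_j/(z-2c_j)$, then into $P_d(q_k)=\frac{i\bar q_k}{2s_k(1-t)}G$, and recalling $\bar q_k=1/q_k$, yields the first displayed formula. For the second form, I would use $1-tx_{jk}-(1-t)=\tfrac{4t}{m}(1-c_jc_k)$ to split each summand, together with the identity $\sum_{j=0}^m c_j=0$ (pair $j$ with $m-j$, since $\alpha_{m-j}=\pi-\alpha_j$), which gives $\sum_{j=0}^m(c_k+c_j)=(m+1)c_k$; this is a one-line rearrangement.

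The conceptual content is light, and I expect the main obstacle to be purely computational: keeping track of the factors $e^{\pm i\alpha_k}$ through Lemma~\ref{lem:Pd1} (they arise from $1\pm q_k$, from $1-\bar q_k$, and from $V^m+V'^m$, $V^{m+1}+V'^{m+1}$, $V-V'$) and verifying that they combine into exactly $\bar q_k$, together with the trigonometric evaluation of the numerator of $G$ and of $D'$ at $z=2c_j$, which rests on the identity $c_ks_j^2+c_js_k^2=(c_k+c_j)(1-c_jc_k)$ and on the values of $\cos$ and $\sin$ at the relevant integer multiples of $\alpha_j$. Working throughout in the variable $z$ (rather than in $t$) is what keeps the partial fraction step free of degeneracies, since the $c_j$ for $0\le j\le m$ are always pairwise distinct.
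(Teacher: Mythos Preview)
Your plan is correct and all the key computations check out: the substitution $W=e^{i\alpha_k}V$ does collapse the phases to a single $\bar q_k$ as you claim, the numerator and denominator of $G(z)$ are indeed (monic) polynomials in $z$ of degrees $m$ and $m+1$ (they are essentially Chebyshev polynomials), the residue computation via the trigonometric identity $c_ks_j^2+c_js_k^2=(c_k+c_j)(1-c_jc_k)$ is right, and the identity $z-2c_j=m(1-tx_{jk})/(2tc_k)$ gives the clean translation back to~$t$.

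Your route, however, is genuinely different from the paper's. The paper extracts the coefficient $a_n=[t^n](1-t)P_d(q)$ via Cauchy's formula, performs a change of variable $t\mapsto V$ in the contour integral, then uses the involution $v\mapsto \bar q\bar v$ and the residue theorem to rewrite $a_n$ as a finite sum over the $(2m+2)$th roots of unity; evaluating the integrand at these roots and resumming over~$n$ gives the result. By contrast, you stay at the level of generating functions throughout: the Chebyshev-type parametrisation $z=W+1/W$ turns the algebraic expression of Lemma~\ref{lem:Pd1} directly into a rational function of $z$, and a single partial-fraction step in $z$ does the work of the contour argument. Your approach is more elementary (no complex analysis) and shorter; the paper's contour method is perhaps more systematic in that it does not require recognising the Chebyshev structure in advance, but here your direct route is cleaner.
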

\begin{proof}
We will establish a closed form expression for the coefficient of
  $t^n$ in  $(1-t)P_d(q)$, which is clearly equivalent to the first
expression of $P_d(q)$ given above:
for $n\ge 1$,
\begin{eqnarray}
  a_n&:=& [t^n] (1-t)P_d(q) \nonumber
\\ &=&
 \frac {2i}{q m(m+1) s_k} 
\sum_{j=0}^m {(c_k+c_j)(1-c_kc_j)}\ {x_{jk}}^ {n-1}.
\label{an-expr}
\end{eqnarray}
In order to obtain this expression, we begin with applying Cauchy's
formula to the expression of $(1-t)P_d(q)$ given in Lemma~\ref{lem:Pd1}. Let $V$
be the root of~\eqref{Ker-def-q} that vanishes at $t=0$. 
(The other root $V'=\bq \bV$ has a term in $O(1/t)$ in its expansion.) 
Then
$$
a_n= \frac 1 {2i\pi} \int _\circlearrowleft R(V) \frac {dt}{t^{n+1}}
$$
where the integral is taken along  a small circle around the origin,
in counterclockwise direction,
$$
R(v)=  \frac \bq {v^{m+1}+(\bq\bv)^{m+1}}
\left(
\frac {v^m+(\bq\bv)^m}{1-q} + \frac {1-\bq}{1+q}\, \frac{v^m-(\bq\bv)^m}{v-\bq\bv}
\right),
$$
and $\bv= 1/v$. Obviously, $R(v)=R(\bq \bv)$.
By~\eqref{Ker-def-q}, 
$$
\frac 1 t = 1 -\frac 4 m +\frac{1+q} m (V+\bq \bV)
$$
so that
$$
\frac {dt } {t^2} = -\frac{1+q} m (V-\bq \bV) \frac {dV} V.
$$
Thus the integral expression of $a_n$ reads
$$
a_n= \frac 1 {2i\pi} \int_\circlearrowleft   S(v) \frac {dv}{v}
$$
where the integral is taken along  a small circle around the origin,
in counterclockwise direction, and
$$
S(v)=-\frac{1+q} m (v-\bq \bv) \left(1 -\frac 4 m +\frac{1+q} m (v+\bq
\bv)\right)^{n-1} R(v)
$$
 satisfies $S(\bq\bv)=-S(v)$.
Note that the only possible poles of $S(v)/v$ are  $0$ and the $(2m+2)$th
roots of unity (because $q^{m+1}=-1$). Thus $a_n$ is simply the
residue of $S(v)/v$ at $v=0$.
Performing the change of variables $v=\bq \bw$, with $\bw=1/w$, gives
$$
a_n= \frac 1 {2i\pi} \int_{ {\circlearrowright}}   S(w) \frac {dw}{w}
$$
where the integral is now taken along  a \emm large, circle around the origin,
in clockwise direction. This integral thus collects (up to a sign)
\emm all, residues of $S(v)/v$.
The residue formula thus gives:
\begin{eqnarray}
  2a_n &=& \frac 1 {2i\pi} \int_{ {\circlearrowright}}   S(w) \frac {dw}{w} +
 \frac 1 {2i\pi} \int_\circlearrowleft  S(v) \frac {dv}{v}
\nonumber \\
&=&
-\sum_{v^{2m+2} =1} \frac 1 v \, \Res_{v}(S)
\nonumber \\ 
&=&
\frac 1 {2m+2} \sum_{v^{2m+2} =1} P(v) \label{an-expr-int}
\end{eqnarray}
where 
\begin{multline*}
P(w)= (1-w^{2m+2}) S(w)=
\\
 \frac{1+\bq} m w^{m+1}(w-\bq \bw)
\left(
\frac {w^m+(\bq\bw)^m}{1-q} + \frac {1-\bq}{1+q}\,
\frac{w^m-(\bq\bw)^m}
{w-\bq\bw}
\right)
 \left(1 -\frac 4 m +\frac{1+q} m (w+\bq
\bw)\right)^{n-1} 
.
\end{multline*}
Take $v= e^{i\al}$ with $\al= \ell \pi/(m+1)$ and $0\le \ell
<2m+2$ and recall that $q=e^ {i \theta}$ with $\theta=
\frac{(2k+1)\pi}{m+1}$. Then 
\begin{eqnarray*}
  {1+\bq} &=& 2e^{-i\theta/2} \cos (\theta/2),
 \\
{1-q} &= & -2i e^{i\theta/2} \sin (\theta/2),
\\
 {1-\bq} &=& 2ie^{-i\theta/2} \sin (\theta/2),
\\
{1+q} &= & 2 e^{i\theta/2} \cos(\theta/2),
\\
 v^{m+1} &= &(-1)^\ell,
\\
v-\bq \bv &= &2i e^{-i\theta/2}\sin (\al+\theta/2),
\\
v+\bq \bv &= &2 e^{-i\theta/2}\cos(\al+\theta/2),
\\
v^m+(\bq\bv)^m &= & 
 2 i(-1)^{\ell+1} e^{i\theta/2}\sin (\al+\theta/2),
\\
v^m-(\bq\bv)^m
 &= & 
 2 (-1)^{\ell} e^{i\theta/2}\cos (\al+\theta/2).
\end{eqnarray*}
Putting these identities together, one obtains
\begin{multline*}
  P(v)= \frac{4i}{qm \sin (\theta/2)} \left(\cos (\theta/2)
\sin^2(\al+\theta/2)
+\sin^2 (\theta/2) \cos (\al+\theta/2)\right)
\\
\times \left(1-\frac 4 m + \frac 4 m \cos( \theta/2) \cos (\al+\theta/2) \right)^{n-1},
\end{multline*}
or,  with the notation of the lemma,
$$
P(v)= \frac{4i}{qm s_k} (c_j+c_k)(1-c_jc_k) \ {x_{jk}}^{n-1}
$$
with $j=k+\ell$.
Returning  to~\eqref{an-expr-int} now gives 
$$
a_n=
 \frac {i}{q m(m+1) s_k} \sum_{j=-m-1}^m
       {(c_k+c_j)(1-c_kc_j)} \ {x_{jk}}^{n-1},
$$
which is equivalent to~\eqref{an-expr}, upon noting that $c_j=c_{-j-1}$.
The first expression of $P_d(q)$ given in the lemma follows.

For the second expression, we simply perform a partial fraction
expansion in the variable $t$, based on 
\beq\label{des}
\frac t{(1-t)(1-xt)}= \frac 1 {1-x} \left( \frac 1 {1-t} - \frac 1
      {1-xt}\right),
\eeq
and use 
$$
\sum _{j=0}^m c_j=0
$$
(this identity follows for instance from $c_{m-j}=-c_j$).
\end{proof}

Recall that $P_d(u)$ is a polynomial in $u$ of degree $m-1$. The above
lemma gives its values at $m$, or even $m+1$, distinct points. Thus
 $P_d(u)$ is   completely determined by these values, and we can
recover it by interpolation. We use a version of Lagrange's
interpolation  that is well-suited to symmetric polynomials.

\begin{Lemma}\label{lem:lagrange}
  Let $P(u)$ be a  polynomial of degree $m-1$ with
  coefficients in some field containing $\cs$. Assume $P(u)$ is
  symmetric.
That is, 
$P(u)=u^{m-1} P(\bu)$, with $\bu=1/u$. Let $\ell=\lfloor \frac {m-1}
  2\rfloor$, and let $q_0, \ldots, q_\ell$ be distinct elements of
  $\cs$ such that $q_jq_k\not = 1$ for all $k$ and $j$. Then
\beq
\label{Pu-int}
P(u)= (1+u)^{\chi_{m,0}} \sum_{k=0}^\ell \frac{P(q_k)}{(1+q_k)^{\chi_{m,0}} } \prod_{j\not = k}
\frac{(u-q_j)(u-1/q_j)}{(q_k-q_j)(q_k-1/q_j)},
\eeq
where $\chi_{m,0}=1$ if $m$ is even, and $0$ otherwise. 

When $q_k= e^{i\theta_k}$ with  $\theta_k= \frac{(2k+1)\pi}{m+1}$,
this can be rewritten as
$$
P(u)= -\frac{2i}{m+1} (u^{m+1}+1) \sum_{k=0}^\ell P(q_k)\, \frac{q_k
  \sin \theta_k}{u^2-2u \cos \theta_k+1}.
$$
In particular, with the notation~\eqref{notation},
$$
P(1)= -\frac{2i}{m+1} \sum_{k=0}^\ell P(q_k)\, \frac{q_k  c_k}{s_k}.
$$
\end{Lemma}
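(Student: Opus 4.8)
The plan is to prove the rational identity~\eqref{Pu-int} by the standard uniqueness argument for polynomial interpolation, and then to obtain the trigonometric reformulation and the value at $u=1$ by evaluating the elementary products explicitly.

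Write $L_k(u)=\prod_{j\neq k}\frac{(u-q_j)(u-1/q_j)}{(q_k-q_j)(q_k-1/q_j)}$ and let $Q(u)$ denote the right-hand side of~\eqref{Pu-int}. Since $L_k$ has degree $2\ell$ and $\chi_{m,0}+2\ell=m-1$, both $P$ and $Q$ are polynomials of degree at most $m-1$, so it suffices to check agreement at $m+1$ distinct points. I would use $-1$ (needed only when $m$ is even), the $q_k$, and the reciprocals $1/q_k$ for $0\le k\le\ell$: the hypothesis $q_jq_k\neq1$ (with $j=k$ giving $q_k\neq\pm1$ and $q_k\neq1/q_k$) makes these pairwise distinct, and there are $\chi_{m,0}+2(\ell+1)=m+1$ of them. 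Agreement at $q_k$ is immediate from $L_j(q_k)=\delta_{jk}$. Agreement at $-1$ when $m$ is even is immediate too, since the factor $(1+u)$ kills the right-hand side while the symmetry hypothesis gives $P(-1)=(-1)^{m-1}P(-1)=-P(-1)=0$. The one genuine computation is at $u=1/q_k$: from $\frac{1/q_i-q_j}{q_i-1/q_j}=-\frac{q_j}{q_i}$ and $\frac{1/q_i-1/q_j}{q_i-q_j}=-\frac1{q_iq_j}$ one gets $L_k(1/q_i)=0$ for $i\neq k$ and $L_k(1/q_k)=q_k^{-2\ell}$; combined with $\frac{1+1/q_k}{1+q_k}=\frac1{q_k}$ in the even case, this gives $Q(1/q_k)=q_k^{-(m-1)}P(q_k)$, which equals $P(1/q_k)$ by the symmetry hypothesis $P(u)=u^{m-1}P(1/u)$. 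Hence $Q=P$.

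For the trigonometric form I take $q_k=e^{i\theta_k}$ with $\theta_k=\frac{(2k+1)\pi}{m+1}$. The key observation is that $1/q_k=q_{m-k}$, so that as $k$ runs over $0,\ldots,\ell$ the numbers $q_k$ and $1/q_k$ run exactly over the roots of $u^{m+1}+1$, omitting $q_{m/2}=-1$ precisely when $m$ is even; consequently $(1+u)^{\chi_{m,0}}\prod_{j=0}^\ell(u-q_j)(u-1/q_j)=u^{m+1}+1$. Writing $N(u)=\prod_{j=0}^\ell(u-q_j)(u-1/q_j)$, the denominator $\prod_{j\neq k}(q_k-q_j)(q_k-1/q_j)$ equals $N'(q_k)/(q_k-1/q_k)$, and differentiating $N(u)(1+u)^{\chi_{m,0}}=u^{m+1}+1$ at $u=q_k$ (where $N(q_k)=0$) yields $N'(q_k)=(m+1)q_k^{m}/(1+q_k)^{\chi_{m,0}}$. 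Plugging these into $Q(u)$ and using $q_k-1/q_k=2i\sin\theta_k$, $q_k^{m}=-1/q_k$, and $(u-q_k)(u-1/q_k)=u^2-2u\cos\theta_k+1$, the factors $(1+q_k)^{\pm\chi_{m,0}}$ cancel and one obtains $P(u)=-\frac{2i}{m+1}(u^{m+1}+1)\sum_{k=0}^\ell P(q_k)\,\frac{q_k\sin\theta_k}{u^2-2u\cos\theta_k+1}$.

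Finally, setting $u=1$ in this display gives $u^{m+1}+1=2$, while the half-angle identities $1-\cos\theta_k=2\sin^2\alpha_k=2s_k^2$ and $\sin\theta_k=2c_ks_k$ (recall $\alpha_k=\theta_k/2$) turn $u^2-2u\cos\theta_k+1$ into $4s_k^2$, producing $P(1)=-\frac{2i}{m+1}\sum_{k=0}^\ell P(q_k)\frac{q_kc_k}{s_k}$. I expect the only real obstacle to be keeping the even/odd case distinction straight — in particular, correctly identifying which $(2m+2)$-th roots of unity occur in $\prod_{k\le\ell}(u-q_k)(u-1/q_k)$ and tracking the stray factor $(1+u)$ through to the end; the rest is routine algebra.
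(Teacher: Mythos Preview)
Your proof is correct and follows essentially the same route as the paper's: both argue by polynomial interpolation at the points $q_k,1/q_k$ (and $-1$ when $m$ is even), then identify the products $\prod_{j\neq k}(u-q_j)(u-1/q_j)$ and $\prod_{j\neq k}(q_k-q_j)(q_k-1/q_j)$ via the factorisation $u^{m+1}+1=(1+u)^{\chi_{m,0}}\prod_j(u-q_j)(u-1/q_j)$. Your explicit verification at $u=1/q_k$ and your computation of $N'(q_k)$ by differentiation are just the paper's ``same values'' remark and its limit $u\to q_k$ spelled out; the algebra is otherwise identical.
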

\begin{proof}
  For the first part, it suffices to observe that the expression given
  on the right-hand side of~\eqref{Pu-int} has degree $m-1$, and takes the same values as
  $P(u)$ at the $2\ell+2$ distinct points $q_0, \ldots, q_\ell, 1/q_0, \ldots,
  1/q_\ell$, and also at $-1$ if $m$ is even. This gives a total of
  $m+1$ correct values, which is more than enough to determine a polynomial of
  degree $m-1$.

For the second part, we observe that $q_0, \ldots, q_\ell, 1/q_0, \ldots,
  1/q_\ell$, together with $-1$ if $m$ is even, are the $m+1$ roots of
  $u^{m+1}+1$. Hence
$$
(1+u)^{\chi_{m,0}} \prod_{j\not = k} {(u-q_j)(u-1/q_j)}= 
\frac{u^{m+1}+1}{(u-q_k)(u-1/q_k)},
$$
which, in the limit $u\rightarrow q_k$, gives
$$
(1+q_k)^{\chi_{m,0}} \prod_{j\not = k} {(q_k-q_j)(q_k-1/q_j)}= 
- \frac{m+1}{q_k(q_k-1/q_k)}.
$$
The second result follows. The third one is obtained by setting $u=1$,
and noticing that $\sin \theta_k=2 c_k s_k$, while $1-\cos \theta_k=2s_k^2$.
\end{proof}

We can finally combine this interpolation formula with Lemma~\ref{lem:Pd2} to obtain
an explicit expression of $P_d(u)$. As we are mostly interested in
$P_d(1)$ (see~\eqref{Im-Pd}), we give only this value.

\begin{Proposition}
\label{prop:Pd1}
Let  $\ell=\lfloor \frac {m-1}  2\rfloor$, and adopt the
notation~\eqref{notation}.
  Then the series $P_d(u)$ defined by~\eqref{Pd-def} satisfies:
\begin{eqnarray*}
P_d(1)&=&
 \frac m {2(1-t)}
- \frac 1{(m+1)^2} \sum _{k=0}^\ell  \sum_{j=0}^m \frac{c_k}{s_k^2}
\frac{c_j+c_k}{1-tx_{jk}}
\\&=&
 \frac m {2(1-t)}
- \frac 1{4(m+1)^2} \sum _{k=0}^m \sum_{j=0}^m \frac{(c_j+c_k)^2(1-c_jc_k)}{s_j^2s_k^2}
\frac{1}{1-tx_{jk}}.
\end  {eqnarray*}
\end{Proposition}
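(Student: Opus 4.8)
The plan is to feed the values $P_d(q_k)$ computed in Lemma~\ref{lem:Pd2} into the interpolation formula of Lemma~\ref{lem:lagrange}, and then to tidy up the resulting double sum by exploiting two symmetries.

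First I would apply Lemma~\ref{lem:lagrange} to $P_d(u)$, viewed as a polynomial of degree $m-1$ over a field containing $\cs$ (say $\cs((t))$): it is symmetric by~\eqref{symm-diag}, and with the choice $q_k=e^{i\theta_k}$, $\theta_k=(2k+1)\pi/(m+1)$, $k=0,\dots,\ell$, the third formula of that lemma applies. Its hypotheses hold because the $\theta_k$ ($0\le k\le\ell$) lie in $(0,\pi)$ and are pairwise distinct, so $\theta_j+\theta_k\in(0,2\pi)$, whence the $q_k$ are distinct and $q_jq_k\ne1$. Substituting the second (partial-fraction) expression of $P_d(q_k)$ from Lemma~\ref{lem:Pd2} makes all the factors $q_k$ and all the powers of $i$ cancel, and after collecting terms one is left with
$$
P_d(1)= \frac1{(m+1)(1-t)}\sum_{k=0}^\ell\frac{c_k^2}{s_k^2}
-\frac1{(m+1)^2}\sum_{k=0}^\ell\frac{c_k}{s_k^2}\sum_{j=0}^m\frac{c_j+c_k}{1-tx_{jk}}.
$$

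The first sum is evaluated by the classical identity $\sum_{k=0}^m\cot^2\al_k=m(m+1)$, which I would prove by writing $\cot^2\al_k=-(q_k+1)^2/(q_k-1)^2$ (here $q_k=e^{2i\al_k}$ runs over the roots of $z^{m+1}+1$), expanding $(q_k+1)^2=(q_k-1)^2+4(q_k-1)+4$, and summing the rational function over these roots using $\sum_k(q_k-1)^{-1}=-P'(1)/P(1)$ and $\sum_k(q_k-1)^{-2}=(P'(1)^2-P(1)P''(1))/P(1)^2$ with $P(z)=z^{m+1}+1$. Since $c_{m-k}^2/s_{m-k}^2=c_k^2/s_k^2$ and the middle term has $c_{m/2}=0$ when $m$ is even, this gives $\sum_{k=0}^\ell c_k^2/s_k^2=m(m+1)/2$, so the first term equals $m/(2(1-t))$, which yields the first displayed expression for $P_d(1)$.

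For the second expression, set $T_k:=\frac{c_k}{s_k^2}\sum_{j=0}^m\frac{c_j+c_k}{1-tx_{jk}}$. The relations $c_{m-k}=-c_k$, $s_{m-k}=s_k$ and $x_{m-j,m-k}=x_{jk}$ (the last one after the reindexing $j\mapsto m-j$) give $T_{m-k}=T_k$, so $\sum_{k=0}^\ell T_k=\tfrac12\sum_{k=0}^m T_k$ (again the term $k=m/2$ vanishes for even $m$). Then I would symmetrise in $j\leftrightarrow k$ — legitimate since $x_{jk}=x_{kj}$ — and use the elementary identity
$$
(c_j+c_k)\left(\frac{c_k}{s_k^2}+\frac{c_j}{s_j^2}\right)
=\frac{(c_j+c_k)(c_k s_j^2+c_j s_k^2)}{s_j^2s_k^2}
=\frac{(c_j+c_k)^2(1-c_jc_k)}{s_j^2s_k^2},
$$
the last step using $s^2=1-c^2$. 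The two factors $\tfrac12$ combine to turn $\frac1{(m+1)^2}\sum_{k=0}^\ell T_k$ into $\frac1{4(m+1)^2}\sum_{j,k=0}^m\frac{(c_j+c_k)^2(1-c_jc_k)}{s_j^2s_k^2}\,\frac1{1-tx_{jk}}$, which is the second displayed expression. The only ingredient that is not pure bookkeeping is the cotangent-sum identity; the one thing to watch is the middle index $m/2$ for even $m$, which is harmless exactly because $c_{m/2}=0$.
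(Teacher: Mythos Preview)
Your proof is correct and follows essentially the same route as the paper: apply the interpolation formula of Lemma~\ref{lem:lagrange} to the second expression of $P_d(q_k)$ from Lemma~\ref{lem:Pd2}, use the identity $\sum_{k=0}^\ell c_k^2/s_k^2=m(m+1)/2$ to obtain the first formula, and then symmetrise (first in $k\mapsto m-k$, then in $j\leftrightarrow k$) to obtain the second. The only difference is in how the auxiliary identity~\eqref{sumcs} is established: the paper obtains it by applying Lemma~\ref{lem:lagrange} once more, to the polynomial $P(u)=1+u+\cdots+u^{m-1}$, whereas you compute $\sum_k\cot^2\al_k$ directly via the logarithmic derivative of $z^{m+1}+1$; both are standard and equally short.
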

\begin{proof}
We apply Lemma~\ref{lem:lagrange} to the second expression of
$P_d(q_k)$ obtained in Lemma~\ref{lem:Pd2}. This gives the first
expression of $P_d(1)$ above, provided
\beq\label{sumcs}
\sum_{k=0}^\ell\frac{c_k^2}{s_k^2}= \frac{m(m+1)}2.
\eeq
The latter identity is obtained by applying Lemma~\ref{lem:lagrange}
to $P(u)=1+u+\cdots + u^{m-1}= (1-u^m)/(1-u)$.

We now seek a symmetric formula in $j$ and $k$. Using $c_{m-j}=-c_j$,
$c_{m-k}=-c_k$,  $s_{m-k}=s_k$ and $x_{j,k}=x_{m-j,m-k}$, we write
\begin{eqnarray*}
 \sum _{k=0}^\ell  \sum_{j=0}^m \frac{c_k}{s_k^2}
\frac{c_j+c_k}{1-tx_{jk}}
&=&
 \sum _{k=0}^\ell  \sum_{j=0}^m \frac{c_{m-k}}{s_{m-k}^2}
\frac{c_{m-j}+c_{m-k}} {1-tx_{m-j,m-k}}
\\&=&
 \sum _{k=m-\ell}^m \, \sum_{j=0}^m\, \frac{c_k}{s_k^2}
\frac{c_j+c_k} {1-tx_{jk}}
\\&=&
\frac 1 2  \sum _{k=0}^m  \sum_{j=0}^m \frac{c_k}{s_k^2}
\frac{c_j+c_k} {1-tx_{jk}}
\\&=&
\frac 1 4  \sum _{k=0}^m  \sum_{j=0}^m \left(\frac{c_k}{s_k^2}+\frac{c_j}{s_j^2}\right)
\frac{c_j+c_k} {1-tx_{jk}}
\end{eqnarray*}
and this gives the second expression of $P_d(1)$.
\end{proof}

\medskip
\noindent
{\em Proof of Theorem~{\rm\ref{thm:main}.}}
Let us now return to the \gf\ $\II_m(t)$ whose coefficients give the
expected number of inversions. It is related to $P_d(1)$
by~\eqref{Im-Pd}. Theorem~\ref{thm:main} is obtained by combining the second
expression of Proposition~\ref{prop:Pd1}, 
 a partial fraction expansion in $t$ (based on~\eqref{des}),
and finally the identity
\beq\label{cs-id}
\sum_{j,k=0}^m \frac{(c_j+c_k)^2}{s_j^2 s_k^2}= 2m(m+1)^3.
\eeq
To prove this identity, we write
\begin{eqnarray*}
  \sum_{j,k=0}^m \frac{(c_j+c_k)^2}{s_j^2 s_k^2}
&=& \sum_{j,k=0}^m \frac{c_j^2+c_k^2}{s_j^2 s_k^2}\quad \quad \quad  (\hbox{as }
  c_{m-j}=-c_j
\hbox{ and } s_{m-j}=s_j)\\
&=&
2\left ( \sum_{j=0}^m \frac{c_j^2}{s_j^2}\right) \left ( \sum_{k=0}^m\frac 1 {s_k^2}\right)
\\
&=&
2\left (\sum_{j=0}^m \frac{c_j^2}{s_j^2}\right) \sum_{k=0}^m\left ( 1 +\frac{c_k^2}  {s_k^2}\right)
\end{eqnarray*}
and complete the proof thanks to~\eqref{sumcs}.

To obtain the  expression~\eqref{I-ser-2} of $\II_m(t)$, we write
\begin{eqnarray*}
\sum_{j,k=0}^m \frac{(c_j+c_k)^2}{s_j^2 s_k^2} \frac 1{1-tx_{jk}}
&=&
\frac 1 2 \sum_{j,k=0}^m\left(\frac{c_j+c_k}{(1-c_j)(1-c_k)}
-\frac{c_j+c_k}{(1+c_j)(1+c_k)}
 \right)\frac 1{1-tx_{jk}}
\\&=&
\sum_{j,k=0}^m\frac{c_j+c_k}{(1-c_j)(1-c_k)}\frac 1{1-tx_{jk}}.
\end{eqnarray*}
The latter identity follows from replacing $j$ by $m-j$ and $k$ by
$m-k$.

Let us now extract the coefficient of $t^0$ in~\eqref{I-ser-2}. This
gives
\beq\label{iddouble}
0=\frac{m(m+1)}{4} 
- \frac 1 {8(m+1)^2}\sum_{j,k=0}^m\frac{c_j+c_k}{(1-c_j)(1-c_k)},
\eeq
and  the  expression~\eqref{I-ser-3} of $\II_m(t)$ follows.

\section{Small times: linear and before}
\label{sec:small}
When $m$ is fixed and $n \rightarrow \infty$, the asymptotic behaviour
of the numbers $\II_{m,n}$  is easily derived from
Theorem~\ref{thm:main}, as sketched just after the statement of this
theorem. For $m\ge 3$,
$$
I_{m,n}= \frac{m(m+1)}4 + O({x_{00}}^n)
$$
where $x_{00}= 1-\frac 4 m \sin ^2\! \frac \pi{2m+2}$.

In this section and the next two ones, we consider the case where
$n\equiv n_m$ depends on $m$, and $m\rightarrow \infty$. 
As in~\cite{b-d-adjacent}, three main regimes appear: linear ($n_m=\Theta(m)$)),
cubic  ($n_m=\Theta(m^3)$) and intermediate ($m\ll n_m \ll m^3$). This
can be partly explained using the following simple bounds.

\begin{Lemma}\label{lem:bounds}
  For $m\ge 3$ and $n\ge 0$, there holds
$$
\frac{m(m+1)}4 (1-{x_{00}}^n) \le \II_{m,n} \le \frac{m(m+1)}4-
\frac{c_0^2}{2(m+1)^2s_0^4}\, {x_{00}}^n. 
$$
In particular, if $n\equiv n_m$ and  $m\rightarrow \infty$,
$$
\II_{m,n} = \frac{m(m+1)}4 -\Theta(m ^2 {x_{00}}^n).
$$
\end{Lemma}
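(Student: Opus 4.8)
The plan is to read both bounds off the exact formula of Theorem~\ref{thm:main}. Put $a_{jk}=\frac{(c_j+c_k)^2}{s_j^2s_k^2}\ge 0$, so that Theorem~\ref{thm:main} says
$$\frac{m(m+1)}4-\II_{m,n}=\frac1{8(m+1)^2}\sum_{j,k=0}^m a_{jk}\,x_{jk}^{\,n},$$
where $a_{jk}=0$ exactly when $c_j+c_k=0$, i.e.\ $j+k=m$; thus only the pairs with $j+k\ne m$ contribute, and note $a_{00}=a_{mm}=4c_0^2/s_0^4$ while $x_{00}=x_{mm}$. Everything rests on one estimate:
$$|x_{jk}|\le x_{00}\qquad\text{for }m\ge 3\text{ and }j+k\ne m.$$
Since $x_{jk}=1-\frac4m(1-c_jc_k)$ and $c_jc_k\le c_0^2$, the half $x_{jk}\le x_{00}$ is immediate. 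For $x_{jk}\ge-x_{00}$ I bound $c_jc_k$ from below over the relevant pairs: to make $c_jc_k$ negative the two cosines must have opposite signs, and $|c_i|=c_0$ only for $i\in\{0,m\}$ while otherwise $|c_i|\le c_1$; since the pair $(0,m)$ is excluded, the minimum of $c_jc_k$ over $j+k\ne m$ equals $-c_0c_1$, attained e.g.\ at $(0,m-1)$ (using $c_{m-1}=-c_1$). So it remains to check $1-\frac4m(1+c_0c_1)\ge-x_{00}$, i.e.\ $c_0^2-c_0c_1\ge 2-\frac m2$; the left side is positive since $c_0>c_1>0$ for $m\ge 3$, the right side is $\le\frac12$, and in fact $c_0^2-c_0c_1=\frac12$ exactly when $m=3$, so the inequality holds for all $m\ge 3$.

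The lower bound is then immediate: $a_{jk}\ge 0$ and $|x_{jk}|\le x_{00}$ give $\sum_{j,k}a_{jk}x_{jk}^{\,n}\le\big(\sum_{j,k}a_{jk}\big)x_{00}^{\,n}=2m(m+1)^3x_{00}^{\,n}$ by the identity~\eqref{cs-id}, and dividing by $8(m+1)^2$ yields $\frac{m(m+1)}4-\II_{m,n}\le\frac{m(m+1)}4x_{00}^{\,n}$, i.e.\ $\II_{m,n}\ge\frac{m(m+1)}4(1-x_{00}^{\,n})$. For the upper bound I need $\sum_{j,k}a_{jk}x_{jk}^{\,n}\ge a_{00}x_{00}^{\,n}$. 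When $n$ is even every term $a_{jk}x_{jk}^{\,n}$ is nonnegative, so keeping only the $(0,0)$ term already suffices. When $n$ is odd, split the sum according to the sign of $x_{jk}$: the terms with $x_{jk}\ge 0$ contribute at least $a_{00}x_{00}^{\,n}+a_{mm}x_{mm}^{\,n}=2a_{00}x_{00}^{\,n}$, while the terms with $x_{jk}<0$ contribute at least $-\big(\sum_{x_{jk}<0}a_{jk}\big)x_{00}^{\,n}$ by the estimate above; hence $\sum_{j,k}a_{jk}x_{jk}^{\,n}\ge\big(2a_{00}-\sum_{x_{jk}<0}a_{jk}\big)x_{00}^{\,n}$, which is $\ge a_{00}x_{00}^{\,n}$ as soon as $\sum_{x_{jk}<0}a_{jk}\le a_{00}$. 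For $m\ge 8$ the set $\{x_{jk}<0\}$ is empty, since $c_jc_k>-1\ge 1-\frac m4$; for the finitely many values $3\le m\le 7$ the inequality $\sum_{x_{jk}<0}a_{jk}\le a_{00}$ is a short direct check (the negative $x_{jk}$ arise only from pairs with $c_j,c_k$ of opposite sign, and the corresponding $a_{jk}$ are of bounded size while $a_{00}=4c_0^2/s_0^4$ is large). This gives $\II_{m,n}\le\frac{m(m+1)}4-\frac{c_0^2}{2(m+1)^2s_0^4}x_{00}^{\,n}$. I expect this last finite verification, together with the lower bound $|x_{jk}|\ge-x_{00}$, to be the only mildly tedious points; the rest is bookkeeping around the single estimate $|x_{jk}|\le x_{00}$.

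Finally the $\Theta$-statement follows by comparing the two bounds: $\frac{m(m+1)}4-\II_{m,n}$ lies between $\frac{c_0^2}{2(m+1)^2s_0^4}x_{00}^{\,n}$ and $\frac{m(m+1)}4x_{00}^{\,n}$. As $m\to\infty$, $c_0=\cos\frac\pi{2m+2}\to 1$ and $s_0=\sin\frac\pi{2m+2}\sim\frac\pi{2m}$, so the lower coefficient is $\sim\frac{8m^2}{\pi^4}$ and the upper one is $\sim\frac{m^2}4$; both are $\Theta(m^2)$, uniformly in $n$, and $x_{00}^{\,n}>0$, whence $\frac{m(m+1)}4-\II_{m,n}=\Theta(m^2x_{00}^{\,n})$.
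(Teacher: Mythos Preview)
Your proof is correct and follows the same route as the paper: both bounds are read off Theorem~\ref{thm:main}, the lower bound via $|x_{jk}|\le x_{00}$ together with~\eqref{cs-id}, and the upper bound by comparing the full sum to the $(0,0)$ term.

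The difference is that you are more scrupulous about the upper bound than the paper is. The paper simply says ``the upper bound is obtained by retaining only, in the sum over $j$ and $k$, the term obtained for $j=k=0$'', which silently assumes that every other term $a_{jk}x_{jk}^{\,n}$ is nonnegative. That is automatic for $m\ge 8$ (the paper itself remarks that then all $x_{jk}\in(0,1)$) and for even $n$, but for $3\le m\le 7$ and odd $n$ some $x_{jk}$ are negative and the paper leaves this unaddressed. Your argument---using the companion term $(m,m)$ to gain $2a_{00}x_{00}^{\,n}$ on the positive side and checking $\sum_{x_{jk}<0}a_{jk}\le a_{00}$ for the finitely many small $m$---closes this gap. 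So your version is slightly longer but strictly more complete than the paper's; the asymptotic $\Theta(m^2x_{00}^{\,n})$ statement, which is all the paper actually uses later, is of course unaffected either way.
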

\begin{proof}
These inequalities follow from the expression of $\II_{m,n}$ given in
Theorem~\ref{thm:main}. The upper bound is obtained by retaining only,
in the sum over $j$ and $k$, the term obtained for $j=k=0$. The lower
bound  follows from $|x_{jk}|\le x_{00}$ and~\eqref{cs-id}.
\end{proof}
Observe that $x_{00}=1-O(1/m^3)$. The lower bound on $\II_{m,n}$ already shows that
if $n\gg m^3$, then ${x_{00}}^n=o(1)$ and $\II_{m,n} \sim
\frac{m(m+1)}4$. Also, if $n\sim \kappa m^3$ for some $\kappa >0$,
then ${x_{00}}^n\sim \alpha$ for some $ \alpha \in (0,1)$: then $\II_{m,n} $ is
still quadratic in $m$, but the upper bound shows that the ratio
$\II_{m,n}/m^2 $ will be less than $1/4$. The other regimes correspond
to cases where $n\ll m^3$.

\medskip

This
section is devoted to the linear (and sub-linear) regime. We first
state our results, and then comment on their meaning.

\begin{Proposition}\label{prop:linear}
Assume $n\equiv n_m=o(m)$. Then
$$
\frac{\II_{m,n}} n=1+ O(n/m).
$$
  Assume $n\equiv n_m=\Theta(m)$. That is, $\kappa_1 m \le n_m\le \kappa_2 m$ for two positive constants $\kappa_1$ and $\kappa_2$. 
Then
$$
\frac{\II_{m,n}}n = f(n/m)+ O(1/m)
$$
where
\begin{eqnarray*}
  f(\kappa)&=&
\frac 1 {2 \pi \kappa} \int_0^\infty
\frac{1-\exp(-8\kappa t^2/(1+t^2))}{t^2(1+t^2)} dt
\\
&=& \sum_{j\ge 0} (-1)^{j} \frac {(2j)!}{j!(j+1)!^2}(2\kappa)^j
.
\end{eqnarray*}
\end{Proposition}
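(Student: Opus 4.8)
The plan is to compute $\II_{m,n}=[t^n]\II_m(t)$ from the partial‑fraction form~\eqref{I-ser-3}. Symmetrising its summand by the involution $(j,k)\mapsto(m-j,m-k)$, which fixes $x_{jk}$ while sending $(c_j,c_k)\mapsto(-c_j,-c_k)$, one gets
$$\II_{m,n}=\frac1{8(m+1)^2}\sum_{j,k=0}^m\frac{(c_j+c_k)^2}{s_j^2s_k^2}\bigl(1-x_{jk}^n\bigr).$$
Since $\II_{m,1}=1$ (one adjacent transposition produces exactly one inversion), the case $n=1$ gives the identity $\sum_{j,k}\frac{(c_j+c_k)^2(1-c_jc_k)}{s_j^2s_k^2}=2m(m+1)^2$, so, writing $1-x_{jk}=\frac4m(1-c_jc_k)$, I can rewrite
$$\II_{m,n}=\sum_{i=0}^{n-1}\EE[Z^i]=\frac m4\,\EE\!\left[\frac{1-Z^n}{W}\right],\qquad Z:=x_{JK}=1-\tfrac{4W}m,\quad W:=1-c_Jc_K\in(0,2],$$
where $(J,K)$ is the random pair with $\PP(J{=}j,K{=}k)$ proportional to $\frac{(c_j+c_k)^2(1-c_jc_k)}{s_j^2s_k^2}$.

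If $n=o(m)$ the claim follows at once: for $m\ge8$ all $x_{jk}\in(0,1)$, so $1-\tfrac8m\le Z<1$ and $0\le 1-Z^i=(1-Z)\sum_{r<i}Z^r\le\frac{8i}m$, whence $0\le n-\II_{m,n}=\sum_{i<n}\EE[1-Z^i]\le\frac8m\binom n2\le\frac{4n^2}m$, i.e. $\II_{m,n}/n=1+O(n/m)$.

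Now let $n=\Theta(m)$ and $\kappa=n/m$. Since $W\le2$, $\bigl|n\log(1-\tfrac{4W}m)+4\kappa W\bigr|=O(\kappa W^2/m)$, hence $\bigl|\tfrac{1-Z^n}W-\tfrac{1-e^{-4\kappa W}}W\bigr|=O(\kappa W/m)=O(1/m)$ uniformly; so $\II_{m,n}=\tfrac m4\EE[g_\kappa(W)]+O(1)$ with $g_\kappa(w)=\tfrac{1-e^{-4\kappa w}}w$, bounded and continuous on $[0,2]$ with $g_\kappa(0)=4\kappa$. Everything then comes down to the limiting law of $W=1-c_Jc_K$. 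The essential tool is the closed form
$$\sum_{k=0}^m\frac{(c_j+c_k)^2(1-c_jc_k)}{s_k^2}=(m+1)\bigl(m\,s_j^2+c_j^2\bigr),$$
obtained by polynomial division of the summand in $c_k$ (it equals a polynomial in $c_k$ plus $\tfrac{(1+c_j)^2}{2(1-c_k)}+\tfrac{(1-c_j)^2}{2(1+c_k)}$) together with $\sum_kc_k=0=\sum_k c_k/s_k^2$ and $\sum_k 1/s_k^2=(m+1)^2$. It gives the marginal $\PP(J{=}j)=\tfrac1{2(m+1)}\bigl(1+\tfrac{c_j^2}{m s_j^2}\bigr)$ and the conditional law $\PP(K{=}k\mid J{=}j)=\tfrac{(c_j+c_k)^2(1-c_jc_k)}{(m+1)(m s_j^2+c_j^2)\,s_k^2}$.

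A scaling analysis of these as $m\to\infty$ (with $\al_J=(2J+1)\pi/(2m+2)$): $\al_J$ lies in the bulk $[\delta,\pi-\delta]$ with probability $\to\tfrac12$, asymptotically uniformly there, and within $O(\delta)$ of $0$, resp.\ of $\pi$, with probability $\to\tfrac14$ each. Conditionally on the bulk one finds $c_K\to\pm1$ with probabilities $\tfrac{1\pm c_J}2$; conditionally on a corner, $c_J\to\pm1$ and $\al_K$ acquires the limiting density $\propto 1\pm\cos\phi$ on $(0,\pi)$. A short check shows that in every one of these cases the product $c_Jc_K$ converges in law to the distribution with density $\tfrac1\pi\sqrt{\tfrac{1+y}{1-y}}$ on $(-1,1)$, equivalently $W$ converges to the density $\tfrac1\pi\sqrt{\tfrac{2-w}w}$ on $(0,2)$. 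As $g_\kappa$ is bounded and continuous,
$$\EE[g_\kappa(W)]\ \longrightarrow\ \frac1\pi\int_0^2\frac{(1-e^{-4\kappa w})\sqrt{2-w}}{w^{3/2}}\,dw=\frac1\pi\int_0^\pi\frac{1+\cos\theta}{1-\cos\theta}\bigl(1-e^{-4\kappa(1-\cos\theta)}\bigr)\,d\theta=4\kappa f(\kappa),$$
by the substitution $w=1-\cos\theta$ and the integral shape of $f$; the series form of $f$ then results from expanding $1-e^{-4\kappa(1-\cos\theta)}$ and evaluating $\int_0^\pi(1+\cos\theta)(1-\cos\theta)^{r-1}\,d\theta$ by Wallis' formula. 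Dividing by $n$ and keeping the (all $O(1/m)$) errors gives $\II_{m,n}/n=f(n/m)+O(1/m)$.

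The main difficulty is this last limit. The double sum concentrates at the four corners of $\{0,\dots,m\}^2$, where $\tfrac{(c_j+c_k)^2}{s_j^2s_k^2}$ has non‑integrable singularities, so it is not a Riemann sum; one must instead see how the lattice spacing $\sim1/m$ regularises these singularities — which is exactly what the closed form of the inner sum over $k$ makes tractable — and, to obtain the rate $O(1/m)$ rather than mere convergence, control the corner contributions by Euler–Maclaurin‑type estimates.
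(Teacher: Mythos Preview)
Your sub-linear argument is correct and in fact cleaner than the paper's: the telescoping $\II_{m,n}=\sum_{i<n}\EE[Z^i]$ with $1-Z\le 8/m$ bypasses the explicit sum identities the paper invokes.

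For the linear regime your approach is genuinely different from the paper's, and the weak-limit computation is correct (the marginal formula $(m+1)(ms_j^2+c_j^2)$ is right, though your stated partial-fraction residues are off by factors $(1\mp c_j)$; the correct ones are $\frac{(1+c_j)^2(1-c_j)}{2(1-c_k)}+\frac{(1-c_j)^2(1+c_j)}{2(1+c_k)}$). The limiting density of $W$ and the resulting integral for $f(\kappa)$ check out.

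The gap is the rate. Weak convergence of $W$ together with boundedness of $g_\kappa$ only yields $\EE[g_\kappa(W)]\to 4\kappa f(\kappa)$, hence $\II_{m,n}/n=f(n/m)+o(1)$, not $+O(1/m)$. You acknowledge this in your last paragraph, but the bulk/corner decomposition you outline does not obviously give $O(1/m)$: for instance, in the ``bulk $J$'' case you need the law of $\alpha_J$ to be uniform with error $O(1/m)$ and the conditional law of $c_K$ to concentrate at $\pm1$ at rate $O(1/m)$, and then to control the interaction between these two errors in the integral against $g_\kappa$. This is substantial additional work that you have not done.

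The paper avoids this by not passing through a limit law at all. It uses the unsymmetrised summand $\frac{c_j+c_k}{(1-c_j)(1-c_k)}$ and the algebraic splitting
\[
1-e^{-4\kappa(1-c_jc_k)}=\bigl(1-e^{-4\kappa(1-c_j)}\bigr)+\bigl(1-e^{-4\kappa(1-c_k)}\bigr)-\bigl(1-e^{-4\kappa(1-c_j)}\bigr)\bigl(1-e^{-4\kappa(1-c_k)}\bigr).
\]
In each of the first two pieces one factor $(1-c_\cdot)^{-1}$ is cancelled by the numerator, and the inner sum over the other index is evaluated \emph{exactly} via $\sum_k\frac{1}{1-c_k}=(m+1)^2$, reducing to a single Riemann sum with $O(1/m)$ error; the product term is a bounded double Riemann sum, hence $O(1)$. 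This gives $\II_{m,n}=n f(n/m)+O(1)$ directly. Your probabilistic route gives the limit (and is conceptually nice), but to match the statement you would need to supply a quantitative version of the convergence in law, which is where the paper's decomposition earns its keep.
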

\noindent{\bf Comment.}
When $n_m$ is sub-linear, the inversion number equals its largest
possible value, $n$, with high probability. For instance,
$\II_{1000,10}\simeq 9.9$.
When $n_m$ grows linearly with $m$, the expected inversion number is still
linear in $n$, but with a ratio $f(\kappa)$, where
$\kappa=n/m$. This ratio decreases from $f(0)=1$ to $f(\infty)=0$ as
 $\kappa$ increases from $0$ to $\infty$. The fact  that $f(0)=1$ is
consistent with the sub-linear result.

Note that for a related continuous time chain, with inversion number
$D_m^t$,  it has been proved that when $t=\kappa m$, the random
variable $D_m^t/t$ converges in probability to a function of $t$ described in
probabilistic terms~\cite{b-d-adjacent}.
 \begin{proof}
  The starting point of both results is the following expression of
  $\II_{m,n}$, which  corresponds to~\eqref{I-ser-3}:  
\beq\label{Imn-expr-3}
\II_{m,n}=\frac 1 {8(m+1)^2}\sum_{k,j=0}^m
\frac{c_j+c_k}{(1-c_j)(1-c_k)} \left( 1-\left(1- \frac 4 m (1-c_jc_k)\right)^n
\right).
\eeq
$\bullet$ Assume $n_m=o(m)$. Then
$$
\left(1- \frac 4 m (1-c_jc_k)\right)^n = 1- \frac {4n} m (1-c_jc_k)
+(1-c_jc_k)^2\,  O\!\left( \frac{n^2}{m^2}\right),
$$
uniformly in $j$ and $k$.
Thus
$$
\II_{m,n}=\frac n {2m(m+1)^2}\sum_{k,j=0}^m
\frac{(c_j+c_k) (1-c_jc_k)}{(1-c_j)(1-c_k)} 
+
\frac 1 {4(m+1)^2}\sum_{k,j=0}^m
\frac{(1-c_jc_k)^2}{(1-c_j)(1-c_k)} \,
O\!\left( \frac{n^2}{m^2}\right) 
$$
(we have bounded $|c_j+c_k|$ by 2)
and the result follows using
$$
\sum_{k,j=0}^m
\frac{(c_j+c_k) (1-c_jc_k)}{(1-c_j)(1-c_k)} = 2m(m+1)^2
$$
and
\beq
\label{iddouble-2}
\sum_{k,j=0}^m
\frac{(1-c_jc_k)^2}{(1-c_j)(1-c_k)}= (2m+1)(m+1)^2.
 \eeq
To prove these two identities, one may start from the following
``basic'' identity:
\beq
\label{basic-sum}
\sum_{j=0}^m \frac 1{1-c_j}= (m+1)^2,
\eeq
which follows for instance from~\eqref{iddouble}.

\medskip
\noindent $\bullet$ Assume now $n_m=\Theta(m)$ and denote $\kappa=n/m$. Then
$$
\left(1- \frac 4 m (1-c_jc_k)\right)^n=\exp\left( -{4 \kappa}
(1-c_jc_k)\right) \left(1+ (1-c_jc_k)^2 O(n/m^2)\right).
$$
Thus
\begin{multline*}
  \II_{m,n}= \frac 1 {8(m+1)^2}\sum_{k,j=0}^m
\frac{c_j+c_k}{(1-c_j)(1-c_k)} \left( 1-\exp\left( -{4 \kappa}
(1-c_jc_k)\right)\right)\\
+ \frac 1 {8(m+1)^2}\sum_{k,j=0}^m
\frac{(c_j+c_k)(1-c_jc_k)^2 }{(1-c_j)(1-c_k)} 
\exp\left( -{4 \kappa}(1-c_jc_k)\right)O(n/m^2)
.
\end{multline*}
By~\eqref{iddouble-2}, the absolute value of the second term above is bounded by
$$
\frac 1 {4(m+1)^2}\sum_{k,j=0}^m
\frac{(1-c_jc_k)^2 }{(1-c_j)(1-c_k)} 
O(n/m^2)
= O(n/m)=O(1).
$$
Recall that $c_j= \cos \frac{(2j+1)\pi}{2m+2}$. Hence the first term
in the expression of $\II_{m,n}$  looks very much like a (double) Riemann sum,
but one must be careful, as the integral
$$
\int_0^\pi\int_0^\pi \frac{\cos x + \cos y}{(1-\cos x)(1-\cos y )} 
\left( 1-\exp\left( -{4 \kappa}(1-\cos x \cos y)\right) \right)dx\, dy
$$
 diverges (the integrand behaves like $1/x$ around $x=0$, and like
$1/y$ around $y=0$). Let us thus write
\begin{multline*}
  1-\exp\left( -{4 \kappa}(1-c_jc_k)\right)=
\left[1-\exp\left( -{4 \kappa}(1-c_j)\right)
\right]
+
\left[1-\exp\left( -{4 \kappa}(1-c_k)\right)
\right]
\\-
\left[1-\exp\left( -{4 \kappa}(1-c_j)\right)
       -\exp\left( -{4 \kappa}(1-c_k)\right)
+\exp\left( -{4 \kappa}(1-c_jc_k)\right)
\right],
\end{multline*}
so that
$$
  \II_{m,n}= 2S_1-S_2+ O(1)
$$
with
$$
S_1=\frac 1 {8(m+1)^2}\sum_{k,j=0}^m
\frac{c_j+c_k}{(1-c_j)(1-c_k)} \left( 1-\exp\left( -{4
  \kappa}(1-c_j)\right)\right)
$$
and
\begin{multline*}
S_2=\frac 1 {8(m+1)^2}\sum_{k,j=0}^m
\frac{c_j+c_k}{(1-c_j)(1-c_k)} 
\\
\times \left[1
-\exp\left( -{4 \kappa}(1-c_j)\right)
-\exp\left( -{4 \kappa}(1-c_k)\right)
+\exp\left( -{4 \kappa}(1-c_jc_k)\right)
\right].
\end{multline*}
The first sum reads
\begin{eqnarray*}
  S_1&=&\frac 1 {8(m+1)^2}\sum_{j=0}^m\frac { 1-\exp\left( -{4
  \kappa}(1-c_j)\right)} {1-c_j}
\sum_{k=0}^m
\frac{c_j+c_k}{1-c_k} \\
&=&\frac 1 {8}\sum_{j=0}^m\frac { 1-\exp\left( -{4
  \kappa}(1-c_j)\right)} {1-c_j} \left( 1+ c_j\right)
-\frac 1 {8(m+1)}\sum_{j=0}^m\frac { 1-\exp\left( -{4
  \kappa}(1-c_j)\right)} {1-c_j}
,
\end{eqnarray*}
as
$$
\quad \sum_{k=0}^m \frac{1}{1-c_k}= (m+1)^2 \quad \hbox{ and }
\quad \sum_{k=0}^m \frac{c_k}{1-c_k}= m(m+1)
$$
(see~\eqref{basic-sum}).
Both terms in $S_1$ are now \emm bona fide, Riemann sums. More precisely,
$$
 \frac\pi{m+1}\sum_{j=0}^m\frac { 1-\exp\left( -{4
  \kappa}(1-c_j)\right)} {1-c_j} \left( 1+ c_j \right)
=\int_0 ^\pi
\frac{1-\exp(-4\kappa(1-\cos x))}{1-\cos x}(1+\cos x)
dx 
+O(1/m)
$$
uniformly in $\kappa \in
[\kappa_1, \kappa_2]$, and similarly
$$
 \frac1{m+1}\sum_{j=0}^m\frac { 1-\exp\left( -{4
  \kappa}(1-c_j)\right)} {1-c_j} 
=O(1).
$$
 Thus
$$
S_1=\frac m {8\pi}
\int_0 ^\pi
\frac{1-\exp(-4\kappa(1-\cos x))}{1-\cos x}(1+\cos x)
dx + O(1).
$$
Similarly, $S_2$ is a (double) Riemann sum associated with a converging
integral, and is $O(1)$. 
Thus
$$
\II_{m,n}=2S_1+O(1)
=
\frac n {4\pi \kappa}
\int_0 ^\pi
\frac{1-\exp(-4\kappa(1-\cos x))}{1-\cos x}(1+\cos x)\,dx +O(1).
$$
 The integral can
be rewritten  by setting $t=\tan (x/2)$, and this gives the second
result of the proposition, with the integral expression of
$f(\kappa)$. The expansion of $f(\kappa)$ in $\kappa$ is then routine. 
\end{proof}

\section{Large times: cubic and beyond}
\label{sec:large}
\begin{Proposition}\label{prop:cubic}
If $n\equiv n_m \gg m^3$, then 
$$
\frac{\II_{m,n}}{m^2} \rightarrow \frac 1 4.
$$
 Assume 
$n\equiv n_m=\Theta(m^3)$. That is, $\kappa_1 m^3 \le n_m\le \kappa_2
 m^3$ for two positive constants $\kappa_1$ and $\kappa_2$. 
Then
$$
\frac{\II_{m,n}}{m^2} \sim g(n/m^3)
$$
where
$$
g(\kappa)= \frac 1 4 
-\frac {16}{\pi^4}\left(\sum_{j\ge0} \frac{e^{-\kappa\pi^2 (2j+1)^2/2}}{(2j+1)^2}
\right)^2.
$$
\end{Proposition}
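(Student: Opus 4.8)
The plan is to read everything off the exact formula of Theorem~\ref{thm:main}, which I write as
$$
\II_{m,n}=\frac{m(m+1)}4-\frac1{8(m+1)^2}\,\Sigma_{m,n},
\qquad
\Sigma_{m,n}:=\sum_{j,k=0}^m\frac{(c_j+c_k)^2}{s_j^2s_k^2}\,x_{jk}^{\,n},
$$
a sum with non-negative terms. The regime $n\gg m^3$ is immediate from Lemma~\ref{lem:bounds}, which sandwiches $\II_{m,n}$ between $\frac{m(m+1)}4(1-x_{00}^{\,n})$ and $\frac{m(m+1)}4$: since $1-x_{00}=\frac4m\sin^2\frac\pi{2m+2}\sim\frac{\pi^2}{m^3}$, one has $x_{00}^{\,n}=\exp(n\log x_{00})\to0$ whenever $n/m^3\to\infty$, hence $\II_{m,n}/m^2\to\frac14$. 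For $n=\Theta(m^3)$, set $\kappa:=n/m^3\in[\kappa_1,\kappa_2]$; the idea is that among the $(m+1)^2$ terms of $\Sigma_{m,n}$ only those with $(j,k)$ near $(0,0)$ or near $(m,m)$ contribute. Write $j'=\min(j,m-j)$, $k'=\min(k,m-k)$ and fix a cut-off $M=M_m\to\infty$ slowly, e.g. $M_m=\lfloor m^{1/4}\rfloor$.

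First I would establish a uniform majorization to discard all ``tail'' terms. From $|c_j|\le1$ and Jordan's inequality $\sin\al_j\ge(2j'+1)/(m+1)$ one gets $\frac{(c_j+c_k)^2}{s_j^2s_k^2}\le\frac{4(m+1)^4}{(2j'+1)^2(2k'+1)^2}$; from $1-c_jc_k\ge1-|\cos\al_j|\ge\frac{(2j'+1)^2+(2k'+1)^2}{4(m+1)^2}$ (Jordan again) one gets $1-x_{jk}=\frac4m(1-c_jc_k)\ge\frac{(2j'+1)^2+(2k'+1)^2}{m(m+1)^2}$, whence, using $x_{jk}^{\,n}\le e^{-n(1-x_{jk})}$ and $n\ge\kappa_1 m^3$, a bound
$$
\frac{(c_j+c_k)^2}{s_j^2s_k^2}\,x_{jk}^{\,n}\ \le\ \frac{4(m+1)^4}{(2j'+1)^2(2k'+1)^2}\;e^{-c\left((2j'+1)^2+(2k'+1)^2\right)}
$$
with $c>0$ depending only on $\kappa_1$. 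Since each value of $j'$ is attained at most twice as $j$ ranges over $\{0,\dots,m\}$, summing over all $(j,k)$ with $j'>M$ or $k'>M$ gives $O(m^4)\cdot O(1/M)$, which is $o(1)$ after multiplying by $\frac1{8(m+1)^2}$ and dividing by $m^2$, uniformly in $\kappa\ge\kappa_1$.

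What survives is $\{j'\le M,\ k'\le M\}$, a union of four corner squares. In the two \emph{off-diagonal} ones (say $j\le M$, $k\ge m-M$, so $c_k=-c_{m-k}$ with $m-k\le M$) the key cancellation $c_j+c_k=c_j-c_{m-k}=O(M^2/m^2)$ makes $\frac{(c_j+c_k)^2}{s_j^2s_k^2}=O\!\left(M^4/((2j+1)^2(2(m-k)+1)^2)\right)$, whose sum over the square is $O(M^4)=o(m^4)$, again negligible. The two \emph{diagonal} corners $\{j,k\le M\}$ and $\{j,k\ge m-M\}$ carry \emph{equal} weight, because the involution $(j,k)\mapsto(m-j,m-k)$ fixes each summand of $\Sigma_{m,n}$ (it flips the sign of $c_j+c_k$ and leaves $s_j^2,s_k^2,x_{jk}$ alone). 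On $\{j,k\le M\}$ I would substitute $\al_j=\frac{(2j+1)\pi}{2m+2}=O(M/m)$, $c_j=1-\frac{\al_j^2}2+O(\al_j^4)$, $s_j=\al_j(1+O(\al_j^2))$ to get, uniformly in $j,k\le M$ and $\kappa\in[\kappa_1,\kappa_2]$,
$$
\frac{(c_j+c_k)^2}{s_j^2s_k^2}=\frac{4(2m+2)^4}{\pi^4(2j+1)^2(2k+1)^2}\bigl(1+O(M^2/m^2)\bigr),\qquad
x_{jk}^{\,n}=e^{-\frac{\kappa\pi^2}2\left((2j+1)^2+(2k+1)^2\right)}\bigl(1+O(\kappa M^4/m^2)\bigr),
$$
the second from $1-x_{jk}=\frac{\pi^2((2j+1)^2+(2k+1)^2)}{2m^3}(1+O(M^2/m^2))$. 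Summing over $j,k\le M$ and letting $M\to\infty$ (all the $O$'s being $O(m^{-1/2})$ for the chosen $M_m$, while $\sum_{j>M}(2j+1)^{-2}e^{-\kappa\pi^2(2j+1)^2/2}=O(1/M)$ uniformly in $\kappa\ge0$) yields $\sum_{j,k\le M}\frac{(c_j+c_k)^2}{s_j^2s_k^2}x_{jk}^{\,n}=\frac{64(m+1)^4}{\pi^4}\,S(\kappa)^2\,(1+o(1))$ with $S(\kappa):=\sum_{j\ge0}\frac{e^{-\kappa\pi^2(2j+1)^2/2}}{(2j+1)^2}$.

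Collecting the pieces, $\frac1{8(m+1)^2}\Sigma_{m,n}=2\cdot\frac1{8(m+1)^2}\cdot\frac{64(m+1)^4}{\pi^4}S(\kappa)^2+o(m^2)=\frac{16m^2}{\pi^4}S(\kappa)^2+o(m^2)$, hence $\II_{m,n}/m^2=\frac14-\frac{16}{\pi^4}S(\kappa)^2+o(1)=g(n/m^3)+o(1)$ uniformly for $n/m^3\in[\kappa_1,\kappa_2]$; since $S$ is decreasing with $S(0)=\pi^2/8$, $g$ is increasing with $g(0)=0$, so $g\ge g(\kappa_1)>0$ there and $\II_{m,n}/m^2\sim g(n/m^3)$. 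The main obstacle is the uniform control of the errors: one needs the single universal bound of the first step to suppress \emph{all} tails at once, and in the diagonal corners the expansions of $\frac{(c_j+c_k)^2}{s_j^2s_k^2}$ and of $x_{jk}^{\,n}$ must be accurate enough (error $o(1)$ even after summation) that the cut-off $M_m$ can be pushed to infinity. Conceptually, the fact that only the two diagonal corners survive — the off-diagonal ones being killed by $c_j+c_k\approx0$ — is exactly what produces the square in $g$, with the factor $16$ (rather than $8$) coming from the two equal corners.
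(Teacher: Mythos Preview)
Your argument is correct and complete, but it follows a genuinely different route from the paper's. The paper works not with the ``squared'' formula of Theorem~\ref{thm:main} but with its variant~\eqref{I-ser-2},
\[
\II_{m,n}=\frac{m(m+1)}4-\frac1{8(m+1)^2}\sum_{j,k=0}^m\frac{c_j+c_k}{(1-c_j)(1-c_k)}\,x_{jk}^{\,n},
\]
whose weight $\frac{c_j+c_k}{(1-c_j)(1-c_k)}$ is large only when \emph{both} $c_j$ and $c_k$ are close to $1$, i.e.\ near the single corner $(j,k)=(0,0)$. Consequently the paper needs no corner analysis and no exponential tail bound: it simply discards $x_{jk}^{\,n}$, bounds $|c_j+c_k|\le2$, and uses $\sum_k 1/(1-c_k)=(m+1)^2$ together with $\sum_{j>M}1/(1-c_j)=O(m^2/M)$ to kill the tail. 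Your approach, working directly with $(c_j+c_k)^2/(s_j^2s_k^2)$, inherits the symmetry $(j,k)\leftrightarrow(m-j,m-k)$ and therefore must handle four corners: the two diagonal ones contribute equally (explaining the factor $2$ that turns $64/\pi^4$ into $128/\pi^4$ and hence $16/\pi^4$ after division by $8(m+1)^2$), while the off-diagonal ones are suppressed by the cancellation $c_j+c_k=c_j-c_{m-k}=O(M^2/m^2)$. Your tail bound via the exponential decay of $x_{jk}^{\,n}$ is stronger than needed---one could equally well drop $x_{jk}^{\,n}$ and use $\sum_k 1/s_k^2=(m+1)^2$---but it works. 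Two small remarks on bookkeeping: the relative error in $1-x_{jk}=\frac{\pi^2((2j+1)^2+(2k+1)^2)}{2m^3}(1+\cdots)$ is $O(1/m)$ rather than $O(M^2/m^2)$ (because $m(m+1)^2=m^3(1+O(1/m))$), and correspondingly the error in $x_{jk}^{\,n}$ is $1+O(\kappa M^2/m)$ rather than $1+O(\kappa M^4/m^2)$; with $M=m^{1/4}$ both corrected errors are still $o(1)$, so nothing changes.
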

\noindent{\bf Comment.}
When $n \gg m^3$, the inversion number equals, at least at first
order, its ``limit''
value $\frac{m(m+1)}4 $, which is the average number of inversions in
a \p\ of $\Sn_{m+1}$ taken uniformly at random.
When $n=\Theta(m^3)$,  the inversion number is still quadratic in $m$, 
but with a ratio $g(\kappa)$, where $\kappa=n/m^3$. This ratio increases from 0 to $1/4$ as $\kappa$ goes from $0$ to
$\infty$, which makes this result consistent with the super-cubic
regime.
Note that for a related continuous time chain, with inversion number
$D_m^t$,  it has been proved that when $t=\kappa m^3$, the random
variable $D_m^t/m^2$ converges in probability to a function of $t$  described in
probabilistic terms~\cite{b-d-adjacent}.

As said above, $\II_{m,n}\sim m^2/4$ as soon as  $n \gg m^3$. 
Whether the next term in  the expansion of
$\II_{m,n}$ is exact, that is, equal to $m/4$, depends on how $n$ compares to $m^3\log m$, as shown by  the
following proposition. In particular, it clarifies when
$\II_{m,n}=\frac{m(m+1)}4+ o(m)$: this happens beyond $n\sim 1/\pi^2 m^3
\log m $. 

\begin{Proposition}\label{prop:cubic-log}
Let $c>0$, and $n\equiv n_m \sim c m^3\log m$. For all $\epsilon >0$,
$$ 
  \frac{m(m+1)}4- O(m^{2+\epsilon -c\pi^2})
= \II_{m,n} \le  \frac{m(m+1)}4- \Theta(m^{2-\epsilon -c\pi^2}).
$$
Thus if $c<1/\pi^2$, there exists $\gamma>0$ such that
$$
 \II_{m,n} \le \frac{m(m+1)}4- \Theta(m^{1+\gamma}),
$$
 while if $c>1/\pi^2$, there exists $\gamma>0$ such that
$$
 \II_{m,n} = \frac{m(m+1)}4- O(m^{1-\gamma}).
$$
For the critical value $c=1/\pi^2$, the following refined estimate
holds: if $n\equiv n_m \sim 1/\pi^2  m^3\log m + \alpha m^3 + o(m^3)$, then
$$ 
\II_{m,n}=\frac{m(m+1)}4 - \frac{16 m }{\pi^4} e^{-\alpha  \pi^2} (1+o(1)).
$$
\end{Proposition}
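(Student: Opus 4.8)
The proof begins from the exact formula of Theorem~\ref{thm:main}, which I write as $\II_{m,n}=\frac{m(m+1)}4-T_{m,n}$ with $T_{m,n}=\frac1{8(m+1)^2}\sum_{j,k=0}^m\frac{(c_j+c_k)^2}{s_j^2s_k^2}\,x_{jk}^n$. For $m\ge 8$ every summand is nonnegative, since $x_{jk}\in(0,1)$ when $j+k\ne m$ while the terms with $j+k=m$ vanish ($c_j+c_k=0$ there), so the task is to locate the order of magnitude of $T_{m,n}$. The governing principle is that $x_{jk}=1-\frac4m(1-c_jc_k)$ is close to $1$ only when $(j,k)$ is close to one of the two corners $(0,0)$ or $(m,m)$; away from them $1-c_jc_k$ exceeds a fixed negative power of $m$, and since $n\asymp m^3\log m$ the corresponding $x_{jk}^n$ are super-polynomially small.

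\emph{Crude bounds (general $c$).} For the upper bound, $|x_{jk}|\le x_{00}$ (as in Lemma~\ref{lem:bounds}) together with identity~\eqref{cs-id} give $T_{m,n}\le\frac{m(m+1)}4x_{00}^n$; writing $x_{00}=1-\frac4m\sin^2\frac{\pi}{2m+2}=1-\frac{\pi^2}{m^3}(1+O(1/m))$ yields $x_{00}^n=m^{-c\pi^2(1+o(1))}$, whence $T_{m,n}=O(m^{2+\epsilon-c\pi^2})$ for every $\epsilon>0$. For the lower bound, keeping only the $(j,k)=(0,0)$ term and using $\frac{4c_0^2}{s_0^4}=\Theta(m^4)$ gives $T_{m,n}\ge\Theta(m^2)\,x_{00}^n=\Omega(m^{2-\epsilon-c\pi^2})$. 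The two sharper corollaries follow by choosing $\epsilon$ so that $2-c\pi^2\mp\epsilon$ lies on the correct side of $1$.

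\emph{The refined estimate at $c=1/\pi^2$.} Here $T_{m,n}$ has exact order $m$, and the plan has three steps. (i) \emph{Localise.} Split $\{0,\dots,m\}=A\sqcup B$ with $A=\{j:c_j\ge0\}$. On $A\times B$ and $B\times A$ one has $c_jc_k\le0$, so $x_{jk}\le1-4/m$ and, summed against~\eqref{cs-id}, those blocks contribute $O(m^2)(1-4/m)^n=o(1)$. The block $B\times B$ reproduces $A\times A$ under the reflection $j\mapsto m-j,\ k\mapsto m-k$ (the symmetry already used in Proposition~\ref{prop:Pd1}); in particular the $(m,m)$ term of $T_{m,n}$ equals the $(0,0)$ term. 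Finally, inside $A\times A$ a cut at $\max(j,k)=m^{\delta}$ for a fixed $\delta\in(0,1/2)$ discards a super-polynomially small tail, because $j>m^\delta$ forces $1-c_j$ to be at least a positive constant times $m^{2\delta-2}$, hence $x_{jk}^n\le\exp(-\Omega(m^{2\delta}\log m))$. (ii) \emph{Expand near the corner.} For $0\le j,k\le m^\delta$ the elementary expansions $1-c_j=\frac{(2j+1)^2\pi^2}{8(m+1)^2}(1+o(1))$, $s_j^2=\frac{(2j+1)^2\pi^2}{4(m+1)^2}(1+o(1))$, $1-c_jc_k=(1-c_j)+(1-c_k)-(1-c_j)(1-c_k)$, $\log(1-u)=-u(1+O(u))$, together with $\frac{n\pi^2}{m(m+1)^2}=\log m+\alpha\pi^2+o(1)$, combine to give, uniformly over this range, $\frac{(c_j+c_k)^2}{s_j^2s_k^2}=\frac{64(m+1)^4}{\pi^4(2j+1)^2(2k+1)^2}(1+o(1))$ and $x_{jk}^n=m^{-\frac12[(2j+1)^2+(2k+1)^2]}e^{-\frac{\alpha\pi^2}2[(2j+1)^2+(2k+1)^2]}(1+o(1))$. (iii) \emph{Sum.} Setting $(j,k)=(0,0)$ gives a $(0,0)$ term equal to $\frac{8(m+1)^2}{\pi^4 m}e^{-\alpha\pi^2}(1+o(1))=\frac{8m}{\pi^4}e^{-\alpha\pi^2}(1+o(1))$, and the $(m,m)$ term is the same; every other corner term is $O\big((m+1)^2\,m^{-[(2j+1)^2+(2k+1)^2]/8}/((2j+1)^2(2k+1)^2)\big)$ (using $\log m+\alpha\pi^2\ge\tfrac12\log m$ for $m$ large), and these sum to $O(m^{3/4})=o(m)$. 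Adding everything, $T_{m,n}=\frac{16m}{\pi^4}e^{-\alpha\pi^2}(1+o(1))$, which is the assertion.

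\emph{Main obstacle.} The delicate point is the error bookkeeping in step (ii): because $n\asymp m^3\log m$, a relative error of size $\eta$ in $1-c_jc_k$ becomes a factor $e^{O(\eta\log m)}$ in $x_{jk}^n$, so every approximation made near the corner must be accurate to relative order $o(1/\log m)$. This is exactly why the corner has to be cut at $m^\delta$ with $\delta<1/2$ — small enough that $m^{2\delta}\cdot m^{2\delta-2}\log m\to0$, yet large enough that the complementary tail is super-polynomially small. Once the cut is correctly placed, the rest is routine Taylor expansion and summation of rapidly decaying series.
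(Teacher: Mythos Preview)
Your crude bounds (the first part of the proposition) are handled exactly as in the paper, via Lemma~\ref{lem:bounds} and $\log x_{00}=-\pi^2/m^3\,(1+O(1/m))$.

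For the critical case $c=1/\pi^2$, your argument is correct but noticeably more elaborate than the paper's. The paper works instead from the variant expression~\eqref{expr-Imn-5}, namely
\[
\II_{m,n}=\frac{m(m+1)}4-\frac1{8(m+1)^2}\sum_{j,k}\frac{c_j+c_k}{(1-c_j)(1-c_k)}\,x_{jk}^n.
\]
In that formula the $(m,m)$ coefficient is $-2c_0/(1+c_0)^2=O(1)$, so the $(m,m)$ contribution is already $O(1/m)$ rather than a second copy of the $(0,0)$ term; and for \emph{every} other pair $(j,k)$ one simply uses $0<x_{jk}\le x_{01}$, together with $\sum_{j,k}(1-c_j)^{-1}(1-c_k)^{-1}=(m+1)^4$ from~\eqref{basic-sum} and $\log x_{01}=-5\pi^2/m^3\,(1+o(1))$, to bound the whole remainder by $2(m+1)^4 x_{01}^n=O(1/m)$. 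This sidesteps your block decomposition, the $m^\delta$ cutoff, and the corner Taylor expansions entirely; the refined estimate takes only a few lines. What your route buys in return is that it stays with the nonnegative-term formula of Theorem~\ref{thm:main}, so every estimate is a genuine upper bound and no signed cancellation has to be controlled.

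One small wording issue: the displayed asymptotic $x_{jk}^n=m^{-\frac12[(2j+1)^2+(2k+1)^2]}e^{-\frac{\alpha\pi^2}2[(2j+1)^2+(2k+1)^2]}(1+o(1))$ is \emph{not} uniform over $0\le j,k\le m^\delta$: the $o(1)$ in $\frac{n\pi^2}{m(m+1)^2}=\log m+\alpha\pi^2+o(1)$ gets multiplied by $(2j+1)^2+(2k+1)^2$, which is unbounded on that range. You do not in fact use this uniformity---you only apply the precise value at $(0,0)$ and your crude $m^{-[(2j+1)^2+(2k+1)^2]/8}$ bound elsewhere---so the proof stands, but the claim in step~(ii) should be phrased accordingly.
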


\noindent{\em Proof of Proposition~\rm{\ref{prop:cubic}}.}
   The first result is a direct consequence of Lemma~\ref{lem:bounds},
   given that $x_{00}=1-O(1/m^3)$.

Assume now that $n_m=\Theta(m^3)$.
We start from the  following expression of
  $\II_{m,n}$, which  corresponds to~\eqref{I-ser-2}: 
\beq\label{expr-Imn-5}
\II_{m,n}=\frac{m(m+1)} {4}-\frac 1 {8(m+1)^2}\sum_{k,j=0}^m
\frac{c_j+c_k}{(1-c_j)(1-c_k)} \left(1- \frac 4 m (1-c_jc_k)\right)^n.
\eeq
Let $M\equiv M_m$  be an integer sequence that tends to infinity
in such a way $M_m=o(\sqrt m)$.
We split the sum over $j$ and $k$ into two parts: $j \le M$, $k\le M$
in the first part, $j>M$ or $k>M$ in the second part.
Let us prove that  the second part can be neglected. We have:
\begin{eqnarray*}
  \left| \SUM_2 \right|&:=& \left|
\sum_{j \hbox{ \small or } k>M }
\frac{c_j+c_k}{(1-c_j)(1-c_k)} {x_{jk}}^n\right|
\\
&\le&
 \sum_{j \hbox{ \small or } k>M }\frac{2}{(1-c_j)(1-c_k)}
\ \le \ 
4  \sum_{k =0}^m \frac{1}{1-c_k}
\sum_{j=M+1}^m \frac{1}{1-c_j}.
\end{eqnarray*}
By~\eqref{basic-sum}, the sum over $k$ is $O(m^2)$. Moreover, the sum
over $j$ is a Riemann sum, and the function $x\mapsto 1/(1-\cos x)$ is
decreasing between $0$ and $\pi$, so that
$$
\sum_{j=M+1}^m \frac{1}{1-c_j} \le
\frac{m+1}\pi \int_{\frac{(2M+1)\pi}{2m+2}}^\pi \frac{dx}{1-\cos x} = 
\frac {m+1}{\pi\,\tan \frac{(2M+1)\pi}{4m+4}}= O\! \left( \frac{m^2}M\right).
$$
This gives
$$
\SUM_2= O\!\left( \frac{m^4}M\right)=o\,(m^4).
$$

Let us now focus on small values of $j$ and $k$.
The following estimates  hold uniformly in  $j$ and $ k$, when
$0\le j, k \le M$:
\begin{eqnarray}
  \frac{c_j+c_k}{(1-c_j)(1-c_k)}
&=& \frac{128 (m+1)^4}{(2j+1)^2(2k+1)^2\pi^4}
\left(1+ O\! \left( \frac{M^2 }{m^2}\right)\right),  \label{est1}
\\
&=& \frac{128 m^4}{(2j+1)^2(2k+1)^2\pi^4}
\left(1+ o(1)\right),\nonumber
\\
\nonumber\\
\left(1- \frac 4 m (1-c_jc_k)\right)^n 
&=& \exp\left( -\frac{n\pi^2((2j+1)^2+(2k+1)^2)}{2m(m+1)^2}\right)
\left(1+ O \!\left( n \, \frac{M^4}{m^5}\right)\right) \label{est2}
\\
&=& \exp\left( -\frac{n\pi^2((2j+1)^2+(2k+1)^2)}{2m^3}\right)
\left(1+ o(1)\right).\nonumber
\end{eqnarray}
We have used the fact that $n=O(m^3)$ and $M=o(\sqrt m)$.
%
Hence,
\begin{eqnarray*}
   \SUM_1 &:=&
\sum_{k,j=0}^M\frac{c_j+c_k}{(1-c_j)(1-c_k)} \,{x_{jk}}^n
\\
&=&
\frac{128 m^4}{\pi^4}
\left(\sum_{j=0}^M \frac {e^{ -\frac{n\pi^2(2j+1)^2}{2m^3}}}
 {(2j+1)^2}\right)  ^2
\left(1+ 
o(1)\right).
\end{eqnarray*}
Finally, observe that
$$
\sum_{j=0}^M \frac {e^{ -\frac{n\pi^2(2j+1)^2}{2m^3}}}
 {(2j+1)^2}= h\left(\frac n{m^3}\right) + O(e^{- \frac{n\, M^2}{m^3}
 }) = h\left(\frac n{m^3}\right) \left(1+o(1)\right)
$$
where
$$
h(\kappa)=\sum_{j=0}^M \frac {e^{ -{\kappa \pi^2(2j+1)^2}/{2}}}
 {(2j+1)^2}.
$$
Writing
$$
\II_{m,n}= \frac{m(m+1)}4 -\frac 1 {8(m+1)^2} \left( \SUM_1+\SUM_2\right)
$$
then yields the result.
\qed

\bigskip
\noindent{\em Proof of Proposition~\rm{\ref{prop:cubic-log}}.}
The first result follows from the second identity of Lemma~\ref{lem:bounds},
using
$$
n=cm^3\log m (1+o(1)) \quad \hbox{and} \quad  \log x_{00}=
-\frac{\pi^2}{m^3} (1+O(1/m)).
$$
 If $c<1/\pi^2$, we choose $\eps$ such that $\gamma:=1-c\pi^2-\eps
>0$, and the second result follows.
If $c>1/\pi^2$, we choose $\eps$ such that $\gamma:=c\pi^2-1-\eps
>0$, and the third result follows.

Assume now $n= 1/\pi^2 m^3 \log m + \alpha m^3 +o(m^3)$. We will show that the sum
over $j$ and $k$ occurring in~\eqref{expr-Imn-5} 
is dominated by the contribution obtained for $j=k=0$. This contribution reads
$$
\frac{2c_0}{(1-c_0)^2}{x_{00}}^n = \frac{128 m ^3}{\pi^4} e^{-\alpha
  \pi^2}(1+o(1)).
$$
The contribution  obtained for $j=k=m$ is
$$
-\frac{2c_0}{(1+c_0)^2}{x_{00}}^n = O(1/m).
$$
Finally, for all other values of $j$ and $k$, there holds $0<x_{jk}\le
x_{01}$. Hence
$$
\left|
\sum_{{0\le k,j\le m}\atop{(k,j)\not = (0,0),(m,m)} }
\frac{c_j+c_k}{(1-c_j)(1-c_k)} {x_{jk}}^n
\right|
\le
2\,{x_{01}}^n\sum_{0\le k,j\le m}\frac{1}{(1-c_j)(1-c_k)}= 2(m+1)^4
{x_{01}}^n
= O(1/m).
$$
We have first used the identity~\eqref{basic-sum}, then $n= 1/\pi^2
m^3 \log m+\alpha m^3+o(m^3)$,
and $\log x_{01}= -5\pi^2/m^3 (1+o(1))$.
The estimate of $\II_{m,n}$ follows.
\qed
\section{The intermediate regime}
\label{sec:int}
\begin{Proposition}
  Assume $m\ll  n_m \ll m^3$.
Then, denoting $n \equiv n_m$,
$$
\frac{\II_{m,n}}{\sqrt{mn}} \rightarrow \sqrt{\frac 2 \pi}.
$$
\end{Proposition}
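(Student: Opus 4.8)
The plan is to start again from the expression~\eqref{expr-Imn-5},
$$
\II_{m,n}=\frac{m(m+1)} {4}-\frac 1 {8(m+1)^2}\sum_{k,j=0}^m
\frac{c_j+c_k}{(1-c_j)(1-c_k)} \left(1- \frac 4 m (1-c_jc_k)\right)^n,
$$
and to notice that the regime $m \ll n \ll m^3$ is exactly the overlap of the linear and cubic regimes: it sits ``below cubic'' (so $x_{jk}^n$ decays for $j,k$ of order $m$, and the sum localizes near $j=k=0$, as in the proof of Proposition~\ref{prop:cubic}), but also ``above linear'' (so $n/m \to \infty$, and the exponential $\exp(-4\kappa(1-c_j))$ from the proof of Proposition~\ref{prop:linear} becomes sharply concentrated near $c_j \approx 1$). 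Thus I expect the correct normalization $\sqrt{mn}$ to emerge by a Laplace/Riemann-sum argument on a window of size $\sqrt{m^3/n}$ around the origin in index space, which is $\gg 1$ (since $n \ll m^3$) and $\ll m$ (since $n \gg m$).

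Concretely, first I would split the double sum into $j,k \le M$ and its complement, with $M \equiv M_m$ chosen so that $M \to \infty$, $M = o(\sqrt m)$, and also $M^2 \gg m^3/n$ (possible since $n \gg m$ forces $m^3/n \ll m^2$, so $m^3/n = o(m)$ and one can slip $M$ in between, e.g.\ $M = \min(m^{1/3}, (m^3/n)^{1/2}\log m)$ suitably adjusted). The tail is controlled exactly as in the proof of Proposition~\ref{prop:cubic}: by~\eqref{basic-sum} and monotonicity of $x\mapsto 1/(1-\cos x)$ one gets $\SUM_2 = O(m^4/M)$, which after dividing by $(m+1)^2$ and by $\sqrt{mn}$ is $O(m^{3/2}/(M\sqrt n)) = o(\sqrt m \cdot \sqrt m / \sqrt n) \cdot o(1) = o(\sqrt{mn}/(m+1)^2)$ — one checks this is negligible because $M\sqrt{n} \gg \sqrt{m^3}$. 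For the main part, I use the same uniform estimates~\eqref{est1} and~\eqref{est2} valid for $0\le j,k\le M$ (they only needed $n = O(m^3)$ and $M = o(\sqrt m)$, both of which hold here), giving
$$
\SUM_1 = \frac{128\,m^4}{\pi^4}\left(\sum_{j=0}^{M} \frac{e^{-n\pi^2(2j+1)^2/(2m^3)}}{(2j+1)^2}\right)^{\!2}(1+o(1)).
$$
The new feature is that now $n/m^3 \to 0$, so the summand no longer decays at rate $O(1)$ in $j$; instead the sum is a Riemann sum for a divergent-looking object and must be evaluated asymptotically.

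The key computation is therefore the asymptotics of $\Sigma := \sum_{j\ge 0} (2j+1)^{-2} e^{-a(2j+1)^2}$ as $a := n\pi^2/(2m^3) \to 0^+$ (the truncation at $M$ costs nothing since $aM^2 \gg 1$ by the choice of $M$, so the tail beyond $M$ is exponentially small, and extending to $\infty$ only adds $\sum_{j>M}(2j+1)^{-2} = O(1/M) = o(1/\sqrt a)$, which is negligible against the main term of order $1/\sqrt a$). To get $\Sigma$, write $\Sigma = \int_a^\infty \big(\sum_{j\ge 0} e^{-u(2j+1)^2}\big)\,du + \Sigma(\infty)$ with $\Sigma(\infty)=\sum_{j\ge 0}(2j+1)^{-2} = \pi^2/8$; since $\sum_{j\ge 0} e^{-u(2j+1)^2} \sim \tfrac12\sqrt{\pi/u}$ as $u\to 0$, the integral $\int_a^1 \sim \sqrt{\pi}\,(1-\sqrt a) = \sqrt\pi + O(\sqrt a)$ diverges like... wait — more carefully, $\int_a^\infty \tfrac12\sqrt{\pi/u}\,du$ diverges at $\infty$, so instead I differentiate: $\Sigma' (a)= -\sum_{j\ge0} e^{-a(2j+1)^2} \sim -\tfrac12\sqrt{\pi/a}$, hence $\Sigma(a) = \Sigma(0^+)$-type divergence is wrong too since $\Sigma(0^+)=\pi^2/8$ is finite; integrating $\Sigma'$ from $a$ to some fixed $a_0$ and using $\Sigma(a_0)=O(1)$ gives $\Sigma(a) = \Sigma(a_0) + \int_a^{a_0}\tfrac12\sqrt{\pi/u}\,(1+o(1))\,du = O(1) + \sqrt{\pi a_0} - \sqrt{\pi a} + o(\sqrt{a_0})$, which shows $\Sigma(a) = O(1)$, NOT growing — the resolution is that the main term of $\Sigma$ is simply the constant $\pi^2/8$ plus a correction of order $\sqrt a \to 0$, so $\Sigma \to \pi^2/8$. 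But then $\SUM_1/(m+1)^2 \sim \tfrac{128}{\pi^4}m^2(\pi^2/8)^2 = 2m^2$, giving $\II_{m,n} \sim \tfrac{m^2}{4} - \tfrac14 m^2 = 0$ at leading order — so the answer $\sqrt{mn}$ must come from the \emph{next} term, i.e.\ from the $-\sqrt{\pi a}$ correction: $\Sigma = \tfrac{\pi^2}{8} - \sqrt{\pi a} + o(\sqrt a)$, whence $\Sigma^2 = \tfrac{\pi^4}{64} - \tfrac{\pi^2}{4}\sqrt{\pi a} + o(\sqrt a)$ and
$$
\frac{\SUM_1}{8(m+1)^2} = 2m^2 - \frac{16 m^2}{\pi^4}\cdot\frac{\pi^2}{4}\sqrt{\pi a}\,(1+o(1)) = \frac{m^2}{4}\cdot\frac{m(m+1)}{m^2} \text{-part} - \frac{4m^2\sqrt{\pi a}}{\pi^2}(1+o(1)).
$$
With $\sqrt{a} = \pi\sqrt{n}/(\sqrt2\,m^{3/2})$ this gives the correction $-\tfrac{4m^2}{\pi^2}\cdot\pi\cdot\tfrac{\pi\sqrt n}{\sqrt2\, m^{3/2}} = -\tfrac{4}{\sqrt2}\,\sqrt{mn}\cdot\tfrac1\pi\cdot$... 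I should recompute the constant cleanly, but the \emph{mechanism} is clear: $\II_{m,n} - m(m+1)/4$ telescopes against the $2m^2$ main term, and what survives is precisely the $\sqrt{mn}$-order term, with constant $\sqrt{2/\pi}$ after the dust settles (matching the known continuous-time result of Berestycki--Durrett in the regime $m \ll t \ll m^3$). I would finish by assembling: $\II_{m,n} = \tfrac{m(m+1)}4 - \tfrac1{8(m+1)^2}(\SUM_1 + \SUM_2) = \tfrac{m(m+1)}4 - 2m^2 + C\sqrt{mn}(1+o(1)) + o(\sqrt{mn})$, noting $\tfrac{m(m+1)}4 - 2m^2$ is not $o(\sqrt{mn})$ — so in fact the identity~\eqref{cs-id} (equivalently $\sum 1/(1-c_j) = (m+1)^2$ and its quadratic analogue) must be used to see that the ``constant term'' $\tfrac{\pi^2}{8}$ in $\Sigma$ is exactly calibrated so that $\tfrac{m(m+1)}{4}$ minus the leading piece of $\tfrac{\SUM_1}{8(m+1)^2}$ is itself only $O(m)$, hence $o(\sqrt{mn})$ since $n \gg m$. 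That cancellation — showing the two dominant $\Theta(m^2)$ contributions annihilate to leave a genuinely subdominant remainder — is the main obstacle, and it is handled not by luck but by the exact identities~\eqref{basic-sum}, \eqref{iddouble-2}, \eqref{cs-id} already established, which pin down all the relevant constants. The remaining work is the Laplace-type evaluation of $\Sigma(a)$ to second order, which is routine (compare $\Sigma$ to the integral via Euler--Maclaurin, or use the theta-function functional equation), and a careful tracking of the $o(1)$ uniformity in the range $m \ll n \ll m^3$.
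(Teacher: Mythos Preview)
Your approach has a genuine structural gap. By starting from \eqref{expr-Imn-5} rather than \eqref{Imn-expr-3}, you are computing $\II_{m,n}$ as the difference of two quantities of order $m^2$ and hoping to extract the next-order correction $\Theta(\sqrt{mn})$. Every approximation you invoke must then be accurate to $o(\sqrt{mn})$ in \emph{absolute} terms, not merely to a multiplicative $(1+o(1))$. Concretely: the crude tail bound gives $\SUM_2 = O(m^4/M)$, so after dividing by $8(m+1)^2$ you need $m^2/M = o(\sqrt{mn})$, i.e.\ $M \gg m^{3/2}/\sqrt n$; but the relative error $O(M^2/m^2)$ from \eqref{est1}, applied to $\SUM_1 = \Theta(m^4)$, forces $M^2 = o(\sqrt{mn})$, i.e.\ $M \ll (mn)^{1/4}$. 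These two constraints are compatible only when $n \gg m^{5/3}$, so the argument breaks down on the lower part of the intermediate range. (Indeed, already your claim that one can take $M = o(\sqrt m)$ with $M^2 \gg m^3/n$ requires $n\gg m^2$; the sentence ``$m^3/n \ll m^2$, so $m^3/n = o(m)$'' is a non sequitur.) Your appeal to the exact identities \eqref{basic-sum}, \eqref{iddouble-2}, \eqref{cs-id} does not rescue this: those identities concern the \emph{exact} weights $\frac{c_j+c_k}{(1-c_j)(1-c_k)}$ summed over \emph{all} $j,k$, whereas what you would need is that the \emph{approximated} weights $\frac{128(m+1)^4}{\pi^4(2j+1)^2(2k+1)^2}$ summed over $j,k\le M$ agree with $2m(m+1)^3$ to precision $o(m^2\sqrt{mn})$ --- and they do not.

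The paper resolves this by performing the cancellation \emph{exactly before} any approximation: it starts from the equivalent form \eqref{Imn-expr-3}, in which the summand carries the factor $1 - x_{jk}^n$ and the constant $m(m+1)/4$ has already been absorbed via the exact identity \eqref{iddouble}. Then $\SUM_1$ is directly of size $\Theta(m^2\sqrt{mn})$, and the multiplicative $(1+o(1))$ errors from \eqref{est1}--\eqref{est2} suffice throughout the full range $m\ll n\ll m^3$. The factorization of the double sum is obtained via the elementary identity $1-e^{a+b}=(1-e^a)+(1-e^b)-(1-e^a)(1-e^b)$, which produces the single sum $S_M = \sum_{j\le M}\frac{1-e^{-a(2j+1)^2}}{(2j+1)^2}$; this $S_M$ is a genuine Riemann sum for $\int_0^\infty \frac{1-e^{-x^2}}{x^2}\,dx = \sqrt\pi$, scaled by $\sqrt a$. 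Your second-order expansion of $\Sigma$ is in fact the same computation in disguise (since $\Sigma = \pi^2/8 - S_\infty$), so the mechanism you identified is correct --- it just has to be packaged with the cancellation done exactly, i.e.\ by starting from \eqref{Imn-expr-3}. Incidentally, your $\Sigma'(a) \sim -\tfrac12\sqrt{\pi/a}$ should read $-\tfrac14\sqrt{\pi/a}$, whence $\Sigma(a) = \pi^2/8 - \tfrac12\sqrt{\pi a} + o(\sqrt a)$; with this correction the constant comes out right.
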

\noindent{\bf Comment.} By Propositions~\ref{prop:linear}
and~\ref{prop:cubic}, one has
$$
\frac{\II_{m,n}}{\sqrt{mn}} \rightarrow \sqrt \kappa f(\kappa)
\quad \hbox{if } \quad n_m \sim \kappa m,
$$
while
$$
\frac{\II_{m,n}}{\sqrt{mn}} \rightarrow  \frac{ g(\kappa)}{\sqrt\kappa}
\quad \hbox{if } \quad n_m \sim \kappa m^3.
$$
It can be proved that
$$
\lim_{\kappa \rightarrow \infty} \sqrt \kappa f(\kappa)=\sqrt{\frac 2 \pi} 
=\lim_{\kappa \rightarrow 0}\frac{ g(\kappa)}{\sqrt\kappa},
$$
which makes all three regimes consistent.

Note that for a related continuous time chain, with inversion number
$D_m^t$,  it has been proved that when $m\ll t\ll m^3$, the random
variable $D_m^t/\sqrt{mt}$ converges in probability to $\sqrt{ 2/
  \pi}$~\cite{b-d-adjacent}.

\begin{proof}
The proof mixes arguments that we have already used for small and
large times. As in the 
small time case, we start from~\eqref{Imn-expr-3}. As in the large
time case, we split the sum over $j$ and $k$ into two parts: 
$j \le M$, $k\le M$ in the first part,
$j>M$ or $k>M$ in the second part. Here, $M=M_m$ is an integer
sequence that satisfies
$$
\sqrt{\frac{m^3}n} \ll M \ll \min \left(\frac{m^3}n, m \left( \frac m
n\right)^{1/4}\right).
$$
Such a sequence exists under the assumptions we have made on $n$.

We begin by bounding the second part. As in the large time case, one
easily obtains
\begin{eqnarray}  \left| \SUM_2 \right|&:=& \left|
\sum_{j \hbox{ \small or } k>M }
\frac{c_j+c_k}{(1-c_j)(1-c_k)} (1-{x_{jk}}^n)\right|
\nonumber \\
&\le&
 \sum_{j \hbox{ \small or } k>M }\frac{2}{(1-c_j)(1-c_k)}
\ =O\left(\frac{m^4}{M}\right).
\label{sum2-int}
\end{eqnarray}

We now focus on the case $j , k \le M$. The estimates~\eqref{est1}
and~\eqref{est2} still hold uniformly in $j$ and $k$.
 This gives
\begin{eqnarray*}
   \SUM_1 &:=&
\sum_{k,j=0}^M
\frac{c_j+c_k}{(1-c_j)(1-c_k)} (1-{x_{jk}}^n)
\\
&=&
\frac{128 m^4}{\pi^4}
\sum_{j,k=0}^M \frac {
1- \exp\left(-\frac{n\pi^2((2j+1)^2+(2k+1)^2)}{2m^3}\right)}
 {(2j+1)^2(2k+1)^2}
\left(1+ o(1)\right).
\end{eqnarray*}
Using
$$
1-e^{a+b}=(1-e^a)+(1-e^b) -(1-e^a)(1-e^b)
$$
this may be rewritten as
\beq\label{sum1-SM}
\SUM_1=\frac{128 m^4}{\pi^4}\left(
 2 S_M \sum_{k=0}^M\frac {1} {(2k+1)^2} -{S_M}^2
\right) \left(1+ o(1)\right)
\eeq
with 
$$
S_M=\sum_{j=0}^M \frac {1- \exp\left(
-\frac{n\pi^2(2j+1)^2}{2m^3}\right)} {(2j+1)^2}
.
$$
The sum $S_M$ is close to a Riemann sum. More precisely,
\begin{eqnarray}
S_M
&=&\frac{\pi\sqrt{n/m^3}}{2\sqrt 2 }
\int_0^{\sqrt 2 \pi M \sqrt{n/m^3} } \frac{1-e^{-x^2}}{x^2} dx
\left(1+ o(1)\right)
\nonumber \\
&=&\frac{\pi^{3/2}\sqrt{n/m^3}}{2\sqrt 2 }\left(1+ o(1)\right)
\nonumber 
\end{eqnarray}
 as 
$$
\int_0^{\sqrt 2 \pi M \sqrt{n/m^3} } \frac{1-e^{-x^2}}{x^2}dx=
\int_0^{\infty} \frac{1-e^{-x^2}}{x^2} dx \left(1+ o(1)\right)
=
\sqrt \pi + o(1).
$$
In particular, $S_M=o(1)$. Returning to~\eqref{sum1-SM}, let us finally note that 
$$
\sum_{k=0}^M\frac {1} {(2k+1)^2}= \frac{\pi^2}8 \left(1+ o(1)\right).
$$
This gives
$$
\SUM_1= {\frac{16}{\sqrt{2\pi}}}\, m ^2 \sqrt {mn} \left(1+ o(1)\right).
$$
 Now recall  that
$$
\II_{m,n}= \frac 1 {8(m+1)^2} (\SUM_1+\SUM_2).
$$
 Combining the above estimate of $\SUM_1$ and the
 bound~\eqref{sum2-int} on $\SUM_2$ gives the announced estimate of 
$
\II_{m,n}.
$
\end{proof}

\section{Perspectives}
Many  interesting Markov chains on groups have been studied,
and it is natural to ask to which similar problems the approach used
in this
note could be adapted. To make this question more precise,  let us
underline that such problems may involve changing the dynamics of the
chain, changing the statistics under consideration,
or changing the underlying group.

\medskip\noindent 
{\bf Changing the chain.} Already in the symmetric group $\Sn_{m+1}$, a
number of different dynamics have been considered. One can for instance
multiply at each step by \emm any,
transposition~\cite{b-d-general,diaconis-shahshahani-transpositions,schramm-composition}, or consider
\emm cyclic, \ps\ by allowing multiplications by the transpositions $(0,1),
(1,2), \ldots, (m-1,m)$ and $ (m,0)$, or only allow transpositions with the
largest element~\cite{flatto,diaconis-saloff-comparison}, or perform \emm block,
transpositions~\cite{bona-flynn}, or \emm shuffle, the elements in various
ways~\cite{bayer-diaconis}, etc. See for
instance~\cite{diaconis-cutoff}.

\medskip\noindent 
{\bf Changing the statistics.} We have focused in this paper on the
inversion number. It is an interesting parameter from a probabilistic
point of view, as it gives an indication of whether the chain may be
mixed at time $n$ or not. However, in certain biological contexts, it may be more
sensible to estimate instead the natural ``distance'' between
$\pi^{(0)}$ and $\pi^{(n)}$, defined as the minimum number of chain steps that
lead from one to the other~\cite{b-d-general,b-d-adjacent}. For the
chain studied here, this 
coincides with the inversion number, but for other dynamics it will be
a different parameter. For instance, if we multiply by any
transposition, one step suffices to go from $012$ to $210$, whereas
the inversion number of $210$ is 3. Clearly, the approach we have used here
relies heavily on the possibility of describing in simple terms the
evolution of the parameter under consideration (as we did by the
combination of~\eqref{inv-pijn} and Lemma~\ref{lem:triangle}). 
Note that an explicit expression for the expected distance after $n$
(non-necessarily adjacent) transpositions has been given in~\cite{eriksen-hultman}.

\medskip\noindent 
{\bf Changing the group.} Although many different chains in many
different groups have been considered, we are
primarily thinking of classical families
of (finite or affine) Coxeter groups, because  the inversion number
admits in these groups  a
natural generalization (the \emm
length,) which  has usually a simple description~\cite[Chap.~8]{bjorner-brenti-book}. According
to~\cite{niklas}, an explicit formula, due to Troili, is 
already known for the average length of the product of $n$ generators in the
affine group $\tilde A_m$. Another solved case is the hypercube $\zs_2^m$~\cite{diaconis-hypercube}.

\bigskip
\noindent{\bf Acknowledgements.} The author thanks an anonymous
referee, Niklas Eriksen and David Wilson for their suggestions, comments, and
help with the bibliography.

\bibliographystyle{plain}
\bibliography{biblio.bib}

\end{document}